\newtheorem{theorem}{Theorem}[section]
\newtheorem{lemma}[theorem]{Lemma}
\newtheorem{corollary}[theorem]{Corollary}	
\newtheorem{proposition}[theorem]{Proposition}
\theoremstyle{definition}
\newtheorem{example}[theorem]{Example}
\newtheorem{definition}[theorem]{Definition}
\newtheorem{remark}[theorem]{Remark}
\begin{document}

\title[]{Total difference chromatic numbers of graphs}

\author{Ranjan Rohatgi}
\address{Department of Mathematics and Computer Science,
Saint Mary's College,
Notre Dame IN 46556}
\email{rrohatgi@saintmarys.edu}

\author{Yufei Zhang}
\address{Department of Mathematics and Computer Science,
Saint Mary's College,
Notre Dame IN 46556}
\email{yzhang01@saintmarys.edu}

\date{\today}

\begin{abstract}
Inspired by graceful labelings and total labelings of graphs, we introduce the idea of total difference labelings. A $k$-total labeling of a graph $G$ is an assignment of $k$ distinct labels to the edges and vertices of a graph so that adjacent vertices, incident edges, and an edge and its incident vertices receive different labels. A $k$-total difference labeling of a graph $G$ is a function $f$ from the set of edges and vertices of $G$ to the set $\{1,2,\ldots,k\}$, that is a $k$-total labeling of $G$ and for which $f(\{u,v\})=|f(u)-f(v)|$ for any two adjacent vertices $u$ and $v$ of $G$ with incident edge $\{u,v\}$. The least positive integer $k$ for which $G$ has a $k$-total difference labeling is its total difference chromatic number, $\chi_{td}(G)$. We determine the total difference chromatic number of paths, cycles, stars, wheels, gears and helms. We also provide bounds for total difference chromatic numbers of caterpillars, lobsters, and general trees.
\end{abstract}

\subjclass[2010]{05C15, 05C78}
\keywords{graph labelings, graceful graphs, total colorings.}
\thanks{Y. Z. was supported by the Sister Miriam Cooney, CSC '51 Endowed Grant for Undergraduate Student Research in Mathematics}

\maketitle

\section{Introduction}\label{sec:intro}

Graph labelings have been widely studied for over half a century, as evidenced by the sheer quantity of results contained in Gallian's regularly-updated survey \cite{Gallian} of the subject. In this paper we define the \emph{total difference chromatic number} of a graph, inspired by graceful and graceful-like labelings (see, for example, \cite{Byers2} for some recent work on the subject) and total labelings. 

A graph labeling is an assignment of integers to the vertices and/or edges of a graph which follows certain conditions. For example, in a graceful labeling (introduced as a ``$\beta$-valuation'' by Rosa in \cite{Rosa} and first referred to as a ``graceful labeling'' by Golomb in \cite{Golomb}) of a graph with $m$ edges, each vertex is assigned an integer from the set $\{0,1,2,\ldots,m\}$ and each edge gets as its label the absolute value of the difference of the labels on its incident vertices. If the resulting edge labels are distinct, the graph is said to have a graceful labeling.

A (proper) \emph{$k$-labeling} of a graph is an assignment of $k$ labels to the vertices of a graph so that adjacent vertices have receive different labels. Similarly, a (proper) \emph{$k$-edge-labeling} is an assignment of $k$ labels to the edges of a graph so that incident edges receive different labels. Combining both ideas, a (proper) \emph{$k$-total-labeling} is an assignment of $k$ labels to the edges and vertices of a graph so that adjacent vertices, incident edges, and an edge and its incident vertices receive different labels. The minimum number of labels for which a $k$-labeling (respectively, $k$-edge-labeling, $k$-total-labeling) of a graph $G$ exists is called the \emph{chromatic number} (respectively, \emph{edge chromatic number}, \emph{total chromatic number}) and is denoted $\chi(G)$ (respectively, $\chi'(G)$. $\chi''(G)$). It is also common to think of labels as colors, and therefore call labelings ``colorings'' instead. As our labels are integers, we stick with the ``labeling'' terminology. Unless otherwise specified, all labelings are assumed to be proper.

Combining the mechanism of labeling the edges of a graph from graceful labelings with total labelings, we define a \emph{$k$-total difference labeling} and the \emph{total difference chromatic number} of a graph in Section~\ref{sec:prelim}, along with some preliminary observations. In Sections~\ref{sec:paths}, \ref{sec:stars}, and \ref{sec:wheels} we determine the total difference chromatic number for arbitrary paths, cycles, stars, and wheels. In the final section, we prove lower and upper bounds for the total difference chromatic numbers of caterpillars, lobsters, and general trees.

\section{Preliminaries}\label{sec:prelim}

We present a few definitions, including that of the \emph{total difference chromatic number} of a graph, and preliminary observations related to the total difference chromatic number of a graph in this section. We use $V(G)$ and $E(G)$ to denote the vertex set and edge set, respectively, of a graph $G$. For integers $a<b$ we denote by $[a,b]$ the set of integers from $a$ to $b$, inclusive. 

\begin{definition}
A \emph{labeling} of a graph $G$ is a function $c:V(G)\rightarrow \mathbb{Z}^+$ (the set of positive integers) such that $c(u)\neq c(v)$ for any two adjacent vertices $u$ and $v$. If the maximum label assigned to any vertex is $k$, we say that $c$ is a \emph{$k$-labeling} of $G$.
\end{definition}

\begin{definition}
A \emph{total labeling} of a graph $G$ is a function $f:V(G)\cup E(G)\rightarrow \mathbb{Z}^+$ such that $f(u)\neq f(v)$ for any two adjacent vertices $u$ and $v$, $f(\{u,v\})\neq f(\{v,w\})$ for any two incident edges $\{u,v\}$ and $\{v,w\}$, and $f(u)\neq f(\{u,v\})$ for any edge $\{u,v\}$ and an incident vertex $u$. If the maximum label assigned to any edge or vertex is $k$, we say that $f$ is a \emph{$k$-total labeling} of $G$.
\end{definition}

\begin{definition}\label{def:ktotdifflabel}Let $f$ be a $k$-total labeling of $G$. We say that $f$ is a \emph{$k$-total difference labeling} if $f(\{u,v\})=|f(u)-f(v)|$, whenever $\{u,v\}\in E(G)$. That is, the label assigned to an edge is the absolute difference of the labels assigned to its incident vertices.
\end{definition}

\begin{definition}\label{def:chromatictotdiff}
The \emph{total difference chromatic number} of a graph $G$, denoted $\chi_{td}(G)$, is the smallest integer $k$ for which $G$ has a $k$-total difference labeling.
\end{definition}

We first prove that the total difference chromatic number is well-defined.

\begin{proposition}\label{prop:bounds}
Given a graph $G$ with $n$ vertices, $\chi''(G)\leq \chi_{td}(G)\leq 3^{n-1}$.
\end{proposition}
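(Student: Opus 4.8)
The lower bound is essentially definitional, so I would dispose of it first. By Definition~\ref{def:ktotdifflabel}, every $k$-total difference labeling is in particular a $k$-total labeling (it merely satisfies the additional constraint $f(\{u,v\})=|f(u)-f(v)|$). Hence a $\chi_{td}(G)$-total difference labeling of $G$ automatically witnesses a $\chi_{td}(G)$-total labeling of $G$, and therefore $\chi''(G)\le\chi_{td}(G)$. No further argument is needed here.

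For the upper bound the plan is to argue constructively. I would label the vertices $v_1,\dots,v_n$ of $G$ by powers of three, setting $f(v_i)=3^{i-1}$, and let each edge receive its forced label $f(\{v_i,v_j\})=|3^{i-1}-3^{j-1}|$. The largest vertex label is $3^{n-1}$, while every edge label is at most $3^{n-1}-1$, so the maximum label used is exactly $3^{n-1}$; it then suffices to verify that $f$ satisfies the three conditions of a total labeling.

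The first condition (adjacent vertices receive different labels) is immediate, since the powers $3^0,\dots,3^{n-1}$ are pairwise distinct. For the edge--vertex condition, the edge $\{v_i,v_j\}$ with $i<j$ has label $3^{j-1}-3^{i-1}$; equality with $f(v_i)=3^{i-1}$ would force $3^{j-1}=2\cdot 3^{i-1}$, and equality with $f(v_j)$ would force $3^{i-1}=0$, both impossible. I would emphasize that \emph{this is the step where base three is essential}: with base two the edge $\{v_i,v_{i+1}\}$ would collide with $f(v_i)$, so the choice of $3$ (and hence the exponent $n-1$ rather than something smaller) is precisely what keeps the edge--vertex condition intact.

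The remaining and most delicate case is the incident-edge condition, which I expect to be the main obstacle. Two incident edges share a single vertex $v_m$ and carry labels $|3^{a-1}-3^{m-1}|$ and $|3^{b-1}-3^{m-1}|$ with $a\ne b$. When $a$ and $b$ lie on the same side of $m$ the two labels differ simply because the powers do, so the only subtle subcase is the straddling one, say $a<m<b$, where the labels are $3^{m-1}-3^{a-1}$ and $3^{b-1}-3^{m-1}$. Equality would give $2\cdot 3^{m-1}=3^{a-1}+3^{b-1}$, but the right-hand side satisfies $3^{a-1}+3^{b-1}\ge 3^{b-1}\ge 3^{m}=3\cdot 3^{m-1}>2\cdot 3^{m-1}$, a contradiction; this is exactly where the multiplicative gap of base three does the work. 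Once all three conditions are verified, $f$ is a $3^{n-1}$-total difference labeling, and the upper bound $\chi_{td}(G)\le 3^{n-1}$ follows.
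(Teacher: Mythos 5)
Your proof is correct and follows essentially the same route as the paper: the lower bound via the observation that a total difference labeling is a total labeling, and the upper bound via labeling the vertices with distinct powers of three. The only cosmetic difference is that the paper verifies the no-collision conditions in one stroke using the factorization $3^i-3^j=3^j(3^{i-j}-1)$, whereas you check the edge--vertex and incident-edge conditions by direct case analysis; both verifications amount to the same arithmetic fact about powers of three.
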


\begin{proof}
The first inequality follows from the observation that a total difference labeling is precisely a total labeling with an additional condition specifying how the edges are labeled. 

To show that $\chi_{td}(G)\leq 3^{n-1}$, arbitrarily label the $n$ vertices with distinct elements from the set $\{3^0, 3^1,3^2,\ldots,3^{n-1}\}$. All edge labels will be of the form $3^i-3^j$ for distinct  integers $i,j\in[0,n-1]$ with $i>j$. Notice that $3^i-3^j=3^j(3^{i-j}-1)$, which implies that all edges labels will be distinct and different from every vertex label.
\end{proof}

\begin{example}\label{ex:bounds}
As an example to show that the total chromatic number and total difference chromatic number can be different, we consider the complete graph $K_3$. As shown in Figure~\ref{fig:k3}, $\chi''(K_3)\leq 3$.  If we attempted to totally label $K_3$ with only two colors, adjacent vertices would have the same label; hence $\chi''(K_3)=3$.

On the other hand, $\chi_{td}(K_3)=4$. Figure~\ref{fig:k3} gives 4 as an upper bound. Notice that only one of $1$ and $2$ can be used as a vertex label since all vertices are adjacent and the edge between the two vertices with these labels would also have label $1$. Therefore, 4 is also a lower bound for $\chi_{td}(K_3)$.
\end{example}

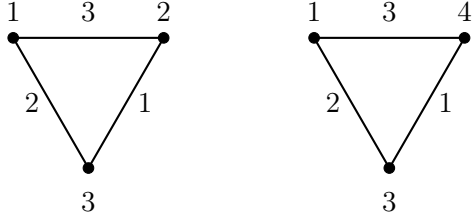
\begin{figure}
    \begin{tikzpicture}
    \draw[fill=black] (0,0) circle (2pt);
    \draw[fill=black] (2,0) circle (2pt);
    \draw[fill=black] (1,-1.732) circle (2pt);
    \draw[fill=black] (4,0) circle (2pt);
    \draw[fill=black] (6,0) circle (2pt);
    \draw[fill=black] (5,-1.732) circle (2pt);
    \draw[thick] (0,0)--(2,0)--(1,-1.732)--(0,0);
    \draw[thick] (4,0) -- (6,0)--(5,-1.732)--(4,0);
    \node at (0,0.35) {1};
    \node at (2,0.35) {2};
    \node at (1,-2.182) {3};
    \node at (1,0.35) {3};
    \node at (0.25,-0.866) {2};
    \node at (1.75,-0.866) {1};
    
    \node at (4,0.35) {1};
    \node at (6,0.35) {4};
    \node at (5,-2.182) {3};
    \node at (5,0.35) {3};
    \node at (4.25,-0.866) {2};
    \node at (5.75,-0.866) {1};
    \end{tikzpicture}
    \caption{Upper bounds for $\chi''(K_3)$ (left) and $\chi_{td}(K_3)$ (right).}
    \label{fig:k3}
\end{figure}

The following definition and propositions help us construct $k$-total difference labelings of several graphs.

\begin{definition}\label{def:doubletriple}
Let $c:V(G)\rightarrow [1,k]$ be a $k$-labeling of a graph $G$. 
\begin{enumerate}
    \item[(a)] A \emph{double} is a pair of adjacent vertices $u$ and $v$ such that $c(u)=2c(v)$. We write \emph{$(c(u),c(v))$-double} if we want to specify the labels of the vertices that form the double.
    \item[(b)] A \emph{triple} is a set of three vertices $u$,$v$,$w$ with $\{u,v\}$,$\{v,w\}\in E(G)$ such that $|c(u)-c(v)|=|c(v)-c(w)|$. We write \emph{$(c(w),c(v),c(u))$-triple} if we want to specify the labels of the vertices that form the triple.
\end{enumerate} 
See Figure~\ref{fig:doubletriple} for an example of a $(2a,a)$-double, an $(a+2b,a+b,a)$-triple and an $(a,b,a)$-triple .
\end{definition}

\begin{figure}
    \begin{tikzpicture}
    \draw[fill=black] (1,2) circle (2pt);
    \draw[fill=black] (3,2) circle (2pt);
    \draw[thick] (1,2) -- (3,2);
    \node at (1,2.35) {$a$};
    \node at (3,2.35) {$2a$};
    \node at (2,1.65) {$a$};
    
    \draw[fill=black] (0,0) circle (2pt);
    \draw[fill=black] (2,0) circle (2pt);
    \draw[fill=black] (4,0) circle (2pt);
    \draw[thick] (0,0) -- (2,0) -- (4,0);
    \node at (0,0.35) {$a$};
    \node at (2,0.35) {$a+b$};
    \node at (4,0.35) {$a+2b$};
    \node at (1,-0.35) {$|b|$};
    \node at (3,-0.35) {$|b|$};
    \draw[fill=black] (0,-2) circle (2pt);
    \draw[fill=black] (2,-2) circle (2pt);
    \draw[fill=black] (4,-2) circle (2pt);
    \draw[thick] (0,-2) -- (2,-2) -- (4,-2);
    \node at (0,-1.65) {$a$};
    \node at (2,-1.65) {$b$};
    \node at (4,-1.65) {$a$};
    \node at (1,-2.35) {$|b-a|$};
    \node at (3,-2.35) {$|b-a|$};
    \end{tikzpicture}
    \caption{In a \emph{double}, an induced edge label is equal to the label on an incident vertex. In a \emph{triple}, two incident edges receive the same label.}
    \label{fig:doubletriple}
\end{figure}
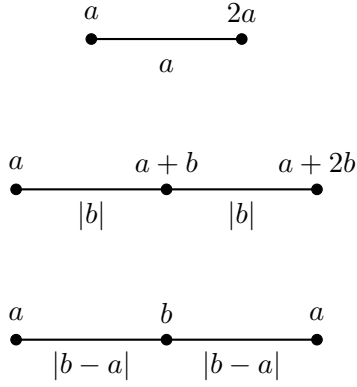

\begin{proposition}\label{prop:lowerbound}
If a graph $G$ with $n$ vertices has $\text{diam}(G)\leq 2$, then $\chi_{td}(G)\geq n$ and in any $k$-total difference labeling of $G$ all vertices must have different labels.
\end{proposition}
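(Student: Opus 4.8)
The plan is to prove the stronger distinctness statement first, since the lower bound follows from it immediately. Suppose, for contradiction, that $f$ is a $k$-total difference labeling of $G$ in which two distinct vertices $u$ and $w$ receive the same label, $f(u)=f(w)$. Because $f$ is in particular a proper total labeling, adjacent vertices must receive distinct labels, so $u$ and $w$ cannot be adjacent. Since $\text{diam}(G)\le 2$ and $u,w$ are distinct and non-adjacent, their distance is exactly $2$; hence they share a common neighbor $v$, and both $\{u,v\}$ and $\{v,w\}$ lie in $E(G)$.

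The heart of the argument is then to evaluate the two induced edge labels at $v$. By the defining property of a total difference labeling, $f(\{u,v\})=|f(u)-f(v)|$ and $f(\{v,w\})=|f(w)-f(v)|$, and since $f(u)=f(w)$ these two quantities are equal. Thus the incident edges $\{u,v\}$ and $\{v,w\}$, which share the vertex $v$, receive the same label. This is exactly the forbidden configuration: the vertices $u,v,w$ form a triple in the sense of Definition~\ref{def:doubletriple}, and the resulting coincidence of incident edge labels contradicts the requirement in the definition of a total labeling that incident edges be distinctly labeled. Therefore no two vertices can share a label.

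Finally, to extract the numerical bound, I would note that a $k$-total difference labeling assigns the $n$ vertices labels from $\{1,2,\ldots,k\}$, and by the distinctness just established these $n$ vertex labels are pairwise distinct. An injection from an $n$-element set into $\{1,\ldots,k\}$ forces $k\ge n$, and minimizing over all valid labelings yields $\chi_{td}(G)\ge n$.

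I do not anticipate a genuine obstacle here; the one step deserving care is the reduction showing that two equally-labeled vertices must be non-adjacent and hence, under the diameter hypothesis, at distance exactly $2$ with a common neighbor. It is precisely this common neighbor that converts an equality of vertex labels into a clash of incident edge labels, and verifying that the diameter bound guarantees such a neighbor is the only place where the hypothesis $\text{diam}(G)\le 2$ is used.
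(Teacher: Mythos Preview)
Your proof is correct and follows essentially the same approach as the paper's: assume two vertices share a label, use properness to rule out adjacency, invoke the diameter hypothesis to find a common neighbor, and derive a contradiction from the resulting equal labels on two incident edges (a triple). Your organization is arguably cleaner---you prove the distinctness claim first and deduce the inequality as an immediate corollary, whereas the paper begins by assuming $\chi_{td}(G)<n$---but the core argument is identical.
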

\begin{proof}
We prove this proposition by contradiction. Assume that a graph $G$ with $n$ vertices has $\text{diam}(G)\leq 2$ and $\chi_{td}(G)=k<n$. Then there must exist two vertices $u$ and $v$ of $G$ such that $f(u)=f(v)$, where $f$ is any $k$-total difference labeling of $G$. Since $f$ is a $k$-total difference labeling of $G$, $u$ and $v$ cannot be adjacent. Since $\text{diam}(G)\leq 2$, there must exist a vertex $w$ adjacent to both $u$ and $v$. But then $u$, $v$, and $w$ form a triple as $|f(u) - f(w)| = |f(v)-f(w)|$, contradicting that $\chi_{td}(G)<n$.
\end{proof}

\begin{lemma}
\label{lem:doublesequence}
Let $c$ be a proper $k$-labeling of a graph $G$ and let $f$ be the (not necessarily proper) total labeling of $G$ defined by $f(v)=c(v)$ for an vertex $v$ of $G$, and $f(\{u,v\})=|c(u)-c(v)|$ for an edge $\{u,v\}$ of $G$. Then $f$ is a $k$-total difference labeling of $G$ if and only if $c$ does not contain any doubles or triples.
\end{lemma}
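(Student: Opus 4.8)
The plan is to notice that, by the very definition of $f$, the difference condition $f(\{u,v\})=|f(u)-f(v)|$ from Definition~\ref{def:ktotdifflabel} holds automatically. Consequently the entire content of the lemma collapses to a single question: when is the total labeling $f$ \emph{proper}? I would therefore check the three defining conditions of a total labeling one at a time and translate each into the vocabulary of doubles and triples supplied by Definition~\ref{def:doubletriple}.

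First I would dispose of the easy ingredients. Because $c$ is assumed to be a proper $k$-labeling, adjacent vertices $u,v$ satisfy $c(u)\neq c(v)$, and since $f(u)=c(u)$ the ``adjacent vertices receive distinct labels'' condition holds for free. I would also record that every edge label $|c(u)-c(v)|$ lies in $[1,k-1]$ while the vertex labels fill $[1,k]$, so the largest label used is exactly $k$ and $f$ is genuinely a candidate $k$-total labeling. These remarks hold in both directions and carry no hypotheses on doubles or triples.

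The heart of the argument is the edge--vertex condition $f(u)\neq f(\{u,v\})$, which after substitution asks whether $c(u)=|c(u)-c(v)|$ can occur on an edge $\{u,v\}$. The key step is a short case analysis on the sign of $c(u)-c(v)$: if $c(u)>c(v)$ then $c(u)=c(u)-c(v)$ forces $c(v)=0$, impossible for a positive label; if $c(u)<c(v)$ then $c(u)=c(v)-c(u)$ forces $c(v)=2c(u)$, which is precisely a $(c(v),c(u))$-double. Applying the same analysis at the endpoint $v$ shows that the edge--vertex condition fails on $\{u,v\}$ exactly when one incident label is twice the other, i.e. exactly when $\{u,v\}$ is a double. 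Hence this condition holds throughout $G$ if and only if $c$ contains no doubles.

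Finally, the incident-edges condition $f(\{u,v\})\neq f(\{v,w\})$ becomes $|c(u)-c(v)|\neq|c(v)-c(w)|$ for two edges sharing the vertex $v$, and an equality here is, word for word, the definition of a triple centered at $v$. So this condition holds throughout $G$ if and only if $c$ contains no triples. Combining the three observations, $f$ is a proper $k$-total labeling—equivalently, a $k$-total difference labeling—precisely when $c$ avoids both doubles and triples, which establishes both directions of the biconditional simultaneously. I do not anticipate a real obstacle; the only place requiring care is the absolute-value case analysis in the edge--vertex condition, where one must invoke the positivity of the labels to rule out the degenerate case and correctly recover the factor of $2$ built into the definition of a double.
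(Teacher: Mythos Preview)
Your proof is correct and follows essentially the same approach as the paper: both arguments reduce properness of $f$ to the three total-labeling conditions, dispose of the adjacent-vertex condition via properness of $c$, unpack the vertex--edge equality $c(u)=|c(u)-c(v)|$ into the cases $c(v)=0$ (impossible) or $c(v)=2c(u)$ (a double), and identify the incident-edge equality with the definition of a triple. The only cosmetic difference is that you argue each condition as an ``if and only if'' and combine them at the end, whereas the paper treats the two implications separately.
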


\begin{proof}
Suppose that $f$ is a $k$-total difference labeling of $G$. By Definition~\ref{def:ktotdifflabel}, $f$ cannot assign a vertex and an incident edge the same label, nor can it assign two incident edges the same label. Hence, $c$ cannot contain a double or triple.

Now suppose that $c$ does not contain doubles or triples. We must show that $f$ does not assign the same label to adjacent vertices, a vertex and an incident edge, or two incident edges.

First, as $c$ is a proper labeling of $G$, no adjacent vertices of $G$ receive the same label. That is, $f(u)\neq f(v)$ for adjacent vertices $u$ and $v$.

Next, assume, by contradiction, that a vertex $v$ and an incident edge $\{u,v\}$ are assigned the same label by $f$. This implies that $f(v)=f(\{u,v\})=|f(u)-f(v)|$. Solving this equation for $f(u)$, we see that either $f(u)=2f(v)$ or $f(u)=0$. As $f(v)=c(v)$ for any vertex $v$ of $G$, we get that either $c(u)=2c(v)$, which contradicts the assumption that $c$ does not contain a double, or $c(u)=0$, which contradicts the assumption that $c$ is a labeling of $G$.

Finally, assume, by contradiction that for two incident edges, $\{u,v\}$ and $\{v,w\}$, we have $f(\{u,v\})=f(\{v,w\})$. An analysis similar to that in the previous case shows that $u$, $v$, and $w$ form a triple under the labeling $c$.
\end{proof}

\begin{remark}\label{rem:onlyvertexlabels}
Lemma~\ref{lem:doublesequence} provides a method of determining if a proposed $k$-total difference labeling actually is one only by looking at the vertex labels. Therefore, when constructing $k$-total difference labelings of families of graphs throughout this paper, we only provide a (proper) vertex labeling and check that it does not contain doubles or triples.
\end{remark}

We conclude this section with a result relating the total difference chromatic number of a graph to those of its subgraphs.

\begin{proposition}\label{prop:subgraph}
If $G'$ is a subgraph of $G$, then $\chi_{td}(G') \leq \chi_{td}(G)$.
\end{proposition}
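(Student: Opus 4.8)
The plan is to restrict an optimal labeling of $G$ down to $G'$ and argue that the restriction remains a valid total difference labeling. Concretely, set $k=\chi_{td}(G)$ and fix a $k$-total difference labeling $f$ of $G$. Since $G'$ is a subgraph, every vertex and every edge of $G'$ is also a vertex or edge of $G$, so I can define $f'$ to be the restriction of $f$ to $V(G')\cup E(G')$. Because the defining relation $f(\{u,v\})=|f(u)-f(v)|$ depends only on the two endpoints $u$ and $v$, both of which retain their labels under restriction, the condition $f'(\{u,v\})=|f'(u)-f'(v)|$ holds automatically for every edge $\{u,v\}$ of $G'$.

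It then remains to verify that $f'$ is proper, i.e., that no two adjacent vertices, no two incident edges, and no incident vertex--edge pair of $G'$ share a label. Here I would invoke Lemma~\ref{lem:doublesequence} together with Remark~\ref{rem:onlyvertexlabels}: it suffices to check that the induced vertex labeling $c'=f'|_{V(G')}$ contains no doubles or triples. Any double of $c'$ is a pair of adjacent vertices of $G'$, hence a pair of adjacent vertices of $G$ on which $c'$ agrees with the vertex labeling underlying $f$; likewise any triple of $c'$ uses two edges of $G'$, hence two incident edges of $G$. Thus a double or triple in $c'$ would be a double or triple in the vertex labeling underlying $f$, which is impossible since $f$ is a $k$-total difference labeling of $G$. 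Hence $c'$ has no doubles or triples, and $f'$ is a valid total difference labeling of $G'$.

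Finally, since every label used by $f'$ is a label used by $f$, the maximum label of $f'$ is at most $k$; so $f'$ is a $k'$-total difference labeling of $G'$ for some $k'\le k$, and by the definition of the total difference chromatic number, $\chi_{td}(G')\le k'\le k=\chi_{td}(G)$.

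I do not expect a genuine obstacle here: the entire point is that all of the defining constraints of a total difference labeling are \emph{local}, involving only vertices and edges within a bounded neighborhood, so passing to a subgraph can only remove constraints, never create new ones. The one thing to watch is that $G'$ need not be an induced subgraph, so it may have fewer edges than $G$ on the same vertex set; this only helps, as it can eliminate potential conflicts rather than introduce them. (One could equally well bypass Lemma~\ref{lem:doublesequence} and argue directly that any violation of properness by $f'$ is simultaneously a violation by $f$, but routing the argument through doubles and triples matches the machinery developed above.)
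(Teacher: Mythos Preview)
Your proof is correct and follows the same approach as the paper: restrict an optimal $k$-total difference labeling of $G$ to $G'$ and observe that the restriction is still a valid total difference labeling using labels at most $k$. The paper's proof is a one-liner that asserts this directly, whereas you spell out the verification via Lemma~\ref{lem:doublesequence}; either route is fine and the underlying idea is identical.
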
 
\begin{proof}
Let $f$ be a $k$-total difference labeling of $G$. As we are removing vertices or edges from $G$ to get $G'$, the restriction of $f$ to $G'$ is an $\ell$-total difference labeling of $G'$ for some $\ell\leq k$.
\end{proof}

\section{Paths and cycles}\label{sec:paths}

In this section, we determine the total difference chromatic numbers of paths and cycles. Lemma~\ref{lem:doublesequence} enables us to consider solely the labels on the vertices of these graphs.

\begin{theorem}\label{thm:path}
For any path $P_n$ with $n\geq 4,  \chi_{td}(P_n) = 4$.
\end{theorem}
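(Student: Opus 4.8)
The plan is to establish the two inequalities $\chi_{td}(P_n)\le 4$ and $\chi_{td}(P_n)\ge 4$ separately, in each case invoking Remark~\ref{rem:onlyvertexlabels} so that I only ever need to exhibit or analyze a proper vertex labeling and check it for doubles and triples (via Lemma~\ref{lem:doublesequence}).

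For the upper bound I would write $P_n=v_1v_2\cdots v_n$ and use the periodic labeling that repeats the block $1,3,4$; that is, set $c(v_i)=1,3,4$ according as $i\equiv 1,2,0\pmod 3$. Two quick checks finish this direction. First, the only doubles realizable with labels in $\{1,2,3,4\}$ come from an adjacent pair $\{1,2\}$ or $\{2,4\}$ (since $c(u)=2c(v)$ forces $(c(u),c(v))\in\{(2,1),(4,2)\}$), and neither occurs because the label $2$ is never used. Second, the sequence of absolute differences along consecutive edges is the repeating block $2,1,3$, in which no two consecutive entries agree, so there is no triple. Since the label $4$ does appear (as $n\ge 4$), Lemma~\ref{lem:doublesequence} promotes $c$ to a $4$-total difference labeling, giving $\chi_{td}(P_n)\le 4$.

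For the lower bound I would reduce to a single small case: by Proposition~\ref{prop:subgraph} it suffices to show $\chi_{td}(P_4)\ge 4$, because $P_4$ is a subgraph of every $P_n$ with $n\ge 4$. Suppose for contradiction that $P_4=v_1v_2v_3v_4$ had a proper labeling $c$ into $\{1,2,3\}$ with no doubles and no triples. The only double available among these labels is an adjacent pair $\{1,2\}$, so every edge must have an endpoint labeled $3$. The interior vertices $v_2,v_3$ are adjacent, hence cannot both equal $3$; and since the edge $v_2v_3$ needs an endpoint labeled $3$, they cannot both avoid $3$ either. Thus exactly one of them is $3$, say $v_2=3$ with $v_3\in\{1,2\}$ (the case $v_3=3$ being symmetric). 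Then the edge $v_3v_4$ forces $v_4=3$, so $v_3$ is flanked by two $3$'s and $|c(v_2)-c(v_3)|=|c(v_4)-c(v_3)|$, making $v_2,v_3,v_4$ a triple — a contradiction. Hence $\chi_{td}(P_4)\ge 4$, and monotonicity yields the claim.

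I expect the only delicate point to be the lower-bound bookkeeping: pinning down that $\{1,2\}$ is the sole double among labels $\{1,2,3\}$, and then tracking the structural constraint ``every edge touches a $3$'' to force a non-$3$ interior vertex sandwiched between two $3$'s, which is exactly the triple that obstructs a three-label solution. The upper-bound construction is routine once one notices that discarding the troublesome label $2$ entirely leaves the clean $1,3,4$ pattern.
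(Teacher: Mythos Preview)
Your proof is correct and essentially matches the paper's approach. The paper uses the periodic block $1,4,3$ rather than your $1,3,4$, and for the lower bound argues directly on $P_n$ (``to avoid $(2,1)$-doubles every other vertex must be $3$, forcing a triple'') rather than first reducing to $P_4$ via Proposition~\ref{prop:subgraph}; but the core reasoning---no adjacent $\{1,2\}$ forces a $3$ on every edge, which sandwiches some non-$3$ vertex between two $3$'s---is identical.
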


\begin{proof}
We denote the vertices in $P_n$ by $v_1,v_2,\ldots,v_n$ from left to right as in Figure~\ref{fig:pathlabel}. We label $v_i$ with $1$ if $i\equiv 1\pmod{3}$, with $4$ if $i\equiv2\pmod{3}$, and with $3$ if $i\equiv0\pmod{3}$. It is straightforward to see that this labeling does not create any doubles or triples, and hence $\chi_{td}(P_n)\leq 4$.

We now show that $\chi_{td}(P_n)\geq 4$ by contradiction. Assume we can $3$-total difference label $P_n$. To avoid $(2,1)$-doubles, we must label every other vertex with $3$, in which case we form a $(3,2,3)$- or $(3,1,3)$-triple.
\end{proof}

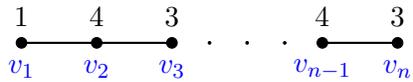
\begin{figure}
    \begin{tikzpicture}
    \draw[fill=black] (0,0) circle (2pt);
    \draw[fill=black] (1,0) circle (2pt);
    \draw[fill=black] (2,0) circle (2pt);
    \draw[fill=black] (2.5,0) circle (0.5pt);
    \draw[fill=black] (3,0) circle (0.5pt);
    \draw[fill=black] (3.5,0) circle (0.5pt);
    \draw[fill = black] (4,0) circle (2pt);
    \draw[fill=black] (5,0) circle (2pt);
    \draw[thick] (0,0)--(1,0)--(2,0);
    \draw[thick] (4,0)--(5,0);
    \node at (0,0.35) {$1$};
    \node at (1,0.35) {$4$};
    \node at (2,0.35) {$3$};
    \node at (4,0.35) {$4$};
    \node at (5,0.35) {$3$};
    \node[color=blue] at (0,-0.35) {$v_1$};
    \node[color=blue] at (1,-0.35) {$v_2$};
    \node[color=blue] at (2,-0.35) {$v_3$};
    \node[color=blue] at (4,-0.35) {$v_{n-1}$};
    \node[color=blue] at (5,-0.35) {$v_n$};
    \end{tikzpicture}
    \caption{We denote the vertices in $P_n$ by $v_1,v_2\ldots,v_n$ and provide a construction that shows $\chi_{td}(P_n)\leq 4$.}
    \label{fig:pathlabel}
\end{figure}

We now turn our attention to cycles. The total difference chromatic number of $C_n$ depends on $n$, the number of vertices in the cycle.

\begin{theorem}\label{thm:cycle}
The total difference chromatic number of a cycle is given by $\chi_{td}(C_n) = 4$ if $n\equiv 0 \pmod{3}$ and $\chi_{td}(C_n)=5$ otherwise.
\end{theorem}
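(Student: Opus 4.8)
The plan is to handle the two congruence classes with matching upper and lower bounds, throughout invoking Lemma~\ref{lem:doublesequence} so that I only ever reason about a (proper) vertex labeling and forbid doubles and triples. I label the vertices $v_1,\dots,v_n$ cyclically and let $c(v_i)$ denote the label on $v_i$.

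First the upper bounds, which are explicit periodic labelings together with, when $3\nmid n$, a short local patch. When $n\equiv 0\pmod 3$, I repeat the block $(1,4,3)$ all the way around; the three vertex types see neighbor-label sets $\{3,4\}$, $\{1,4\}$, $\{1,3\}$ respectively, so no two incident edges agree and no edge label equals an endpoint, giving a $4$-total difference labeling and $\chi_{td}(C_n)\le 4$. When $n\equiv1\pmod3$, I keep the block pattern and splice a single vertex labeled $5$ into one edge joining a $3$ to the following $1$, producing the local string $\dots,4,3,5,1,4,\dots$; a direct check of the three affected vertices shows no new double or triple. When $n\equiv2\pmod3$, I instead splice the two-vertex string $5,2$ into an edge joining a $4$ to the following $3$, producing $\dots,1,4,5,2,3,1,\dots$, and again check the affected vertices. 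Both patches use the label $5$, so $\chi_{td}(C_n)\le 5$; the small base cases $C_4=(1,4,3,5)$ and $C_5=(1,4,5,2,3)$ arise as instances of these constructions.

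The lower bounds are where the real work lies. The inequality $\chi_{td}(C_n)\ge 4$ is immediate: $C_3=K_3$ by Example~\ref{ex:bounds}, and for $n\ge 4$ the path $P_4$ is a subgraph, so Proposition~\ref{prop:subgraph} and Theorem~\ref{thm:path} apply. The crux is showing $\chi_{td}(C_n)\ge 5$ whenever $3\nmid n$, i.e. that no $4$-total difference labeling exists. I argue by a rigidity statement about labels in $\{1,2,3,4\}$. The only doubles available in this range are the pairs $\{1,2\}$ and $\{2,4\}$, so a vertex labeled $2$ may only be adjacent to vertices labeled $3$; its two neighbors would then both be $3$ and form a $(3,2,3)$-triple. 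Hence the label $2$ is never used, and every vertex carries a label in $\{1,3,4\}$. Now a vertex labeled $m\in\{1,3,4\}$ has both neighbors among the two remaining labels, and to avoid a triple these two neighbors must be distinct; thus each vertex is flanked by exactly the two labels different from its own.

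This rigidity forces periodicity: knowing $c(v_{i-1})$ and $c(v_i)$ determines $c(v_{i+1})$ as the unique element of $\{1,3,4\}\setminus\{c(v_{i-1}),c(v_i)\}$, so the label sequence is periodic with period $3$, cycling through $1,3,4$ in a fixed order. For such a sequence to close up consistently around $C_n$ we need $3\mid n$; when $3\nmid n$ this is impossible, so no $4$-total difference labeling exists and $\chi_{td}(C_n)\ge 5$. I expect the main obstacle to be articulating this forced-periodicity argument cleanly---ruling out the label $2$, then showing the remaining three labels must appear in a rigid period-$3$ cyclic order---rather than verifying the explicit $5$-label constructions, which are routine once the patches are chosen.
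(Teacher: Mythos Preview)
Your proof is correct and follows essentially the same approach as the paper: the periodic $(1,4,3)$ vertex labeling, the rigidity argument that first excludes the label $2$ and then forces a period-$3$ cyclic pattern, and explicit patches when $3\nmid n$. The only notable difference is cosmetic: for $n\equiv 2\pmod 3$ the paper replaces a terminal block by $5,1,4,3,5$ (using the label $5$ twice), whereas your two-vertex splice $5,2$ is a tidier patch that also handles $C_5$ directly.
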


\begin{proof}
Note that $C_n$ is constructed by adding an edge between the two vertices of degree 1 in $P_n$. Therefore, by Proposition~\ref{prop:subgraph}, $\chi_{td}(C_n)\geq \chi_{td}(P_n) = 4$. We denote the vertices by $v_1, v_2,\ldots,v_n$ in this cyclic order. 

If $n\equiv0\pmod3$ we can label each vertex $v_i$ in $C_n$ exactly as we labeled $v_i$ in $P_n$, and hence $\chi_{td}(C_n)=4$ in this case.

If $n\equiv1\pmod3$, we label all vertices, except $v_n$, as in the case in which $n\equiv0\pmod3$. We label $v_n$ with 5. See Figure~\ref{fig:cycle_1} for an example. To show $\chi_{td}(C_n)\geq 5$, we assume $\chi_{td}(C_n) = 4$ by contradiction. First notice that if any vertex gets label $2$, then in avoiding $(4,2)$- and $(2,1)$-doubles, we form a $(3,2,3)$-triple. Therefore, all vertices must have labels $1,3, \text{ and } 4$. Without loss of generality, suppose we label $v_1$ with 1. Then the label on $v_2$ is either $3$ or $4$ and $v_3$ gets the remaining label. To avoid doubles and triples, this sequence of labels must repeat, ending with vertex $v_{n-1}$. But then $v_n$ is forced to have a label greater than 4, completing the proof that $\chi_{td}(C_n) = 5$ when $n\equiv 1\pmod{3}$.

If $n\equiv2 \pmod3$, we label vertices $v_1,v_2,\ldots,v_{n-5}$ as in the previous two cases. We then label $v_{n-4},v_{n-3},v_{n-2},v_{n-1},v_n$ with $5,1,4,3,5$ respectively. The argument that shows $\chi_{td}(C_n)\geq 5$ for $n\equiv 2\pmod 3$ is similar to that in the $n\equiv 1\pmod 3$ case. 
 \end{proof}
 
 \begin{figure}
     \begin{tikzpicture}
     \draw[fill=black] (0,2)  circle (2pt);
     \draw[fill=black] (1,1.732) circle (2pt);
     \draw[fill=black] (1.732,1) circle (2pt);
     \draw[fill=black] (2,0) circle (2pt);
     \draw[fill=black] (0,-2)  circle (2pt);
     \draw[fill=black] (1,-1.732) circle (2pt);
     \draw[fill=black] (1.732,-1) circle (2pt);
     \draw[fill=black] (-1,-1.732) circle (2pt);
     \draw[fill=black] (-1,1.732) circle (2pt);
     \draw[fill=black] (-1.732,1) circle (2pt);
     \draw[fill=black] (-2,0) circle (2pt);
     \draw[fill=black] (-1.414,-1.414) circle (0.5pt);
     \draw[fill=black] (-1.732,-1) circle (0.5pt);
     \draw[fill=black] (-1.932,-0.5174) circle (0.5pt);
     
     \draw[thick] (-2,0) -- (-1.732,1) -- (-1,1.732) -- (0,2) -- (1,1.732) -- (1.732,1) -- (2,0) -- (1.732,-1) -- (1,-1.732) -- (0,-2)-- (-1,-1.732);
     
     \node at (0,2.3)  {1};
     \node at (1.15,1.992) {4};
     \node at (1.992,1.15) {3};
     \node at (2.3,0) {1};
     \node at (1.992,-1.15) {4};
     \node at (1.15,-1.992) {3};
     \node at (0,-2.3)  {1};
     \node at (-1.15,-1.992) {4};
     \node at (-2.3,0) {4};
     \node at (-1.992,1.15) {3};
     \node at (-1.15,1.992) {5};
     \node[color=blue] at (0,1.7) {$v_1$};
     \node[color=blue] at (0.85,1.472) {$v_2$};
     \node[color=blue] at (1.472,0.85) {$v_3$};
     \node[color=blue] at (1.7,0) {$v_4$};
     \node[color=blue] at (1.472,-0.85) {$v_5$};
     \node[color=blue] at (0.85,-1.472) {$v_6$};
     \node[color=blue] at (0,-1.7) {$v_7$};
     \node[color=blue] at (-0.85,-1.472) {$v_8$};
     \node[color=blue] at (-1.5,0) {$v_{n-2}$};
     \node[color=blue] at (-1.32,0.8) {$v_{n-1}$};
     \node[color=blue] at (-0.85,1.472) {$v_n$};
     
     \end{tikzpicture}
     \caption{A construction that shows $\chi_{td}(C_n) \leq 5$ when $n\equiv1\pmod3$.}
     \label{fig:cycle_1}
 \end{figure}
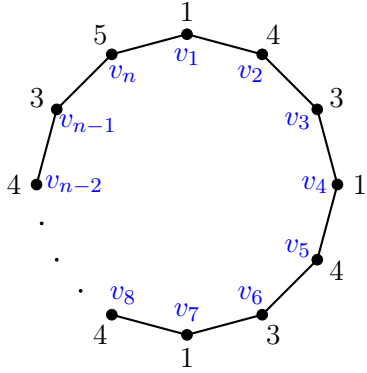

\section{Stars}\label{sec:stars}
We now consider stars, denoted $K_{1,m}$. After determining the total difference chromatic number for an arbitrary star, we explicitly determine the different labels that the maximum degree vertex of a star can receive in a $k$-total difference labeling for certain $k$. For all stars $K_{1,m}$ throughout this paper, we denote the vertex of degree $m$ by $v_0$ and the remaining vertices by $v_1,v_2,\ldots,v_m$. 
\begin{theorem}\label{thm:star}
Let $K_{1,m}$ be a star. Then
$$\chi_{td} (K_{1,m}) = \left\{\begin{array}{ll}
    m + 1, &  m \text{ is even}\\
    m +2, & m \text{ is odd} 
\end{array}\right.$$
\end{theorem}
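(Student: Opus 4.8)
The plan is to reduce the problem to choosing vertex labels and then to analyze the arithmetic constraints that doubles and triples impose on the center. Write $a=f(v_0)$ for the label of the center and $c_i=f(v_i)$ for the labels of the leaves. Since every edge joins the center to a leaf, by Lemma~\ref{lem:doublesequence} and Remark~\ref{rem:onlyvertexlabels} it suffices to produce distinct vertex labels avoiding all doubles and triples; here a double means $c_i=2a$ or $a=2c_i$ for some leaf, while a triple means $c_i+c_j=2a$ for two distinct leaves, since in a star the middle vertex of any length-two path is forced to be the center and $|c_i-a|=|c_j-a|$ with $c_i\neq c_j$ gives $c_i+c_j=2a$. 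Because $\mathrm{diam}(K_{1,m})=2$ and $K_{1,m}$ has $m+1$ vertices, Proposition~\ref{prop:lowerbound} already yields $\chi_{td}(K_{1,m})\geq m+1$ and forces all vertex labels to be distinct.

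For the upper bounds I would exhibit explicit labelings that place the center at the largest available label. When $m$ is even, label the center $a=m+1$ and the leaves $1,2,\ldots,m$, using exactly $m+1$ labels: then $a$ is odd so $a=2c_i$ is impossible, $2a=2m+2>m+1$ so $c_i=2a$ is impossible, and any two distinct leaf labels sum to at most $(m-1)+m=2m-1<2m+2=2a$, so no triple occurs. This matches the lower bound, giving $\chi_{td}=m+1$. When $m$ is odd, label the center $a=m+2$ and the leaves $1,2,\ldots,m$, using $m+2$ labels; the analogous three checks (now $a=m+2$ is odd, $2a=2m+4>m+2$, and leaf sums are at most $2m-1<2a$) show validity, so $\chi_{td}\leq m+2$.

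The crux, and what I expect to be the main obstacle, is the matching lower bound $\chi_{td}\geq m+2$ when $m$ is odd. Suppose for contradiction that $m+1$ labels suffice; by Proposition~\ref{prop:lowerbound} all labels are distinct, so they are exactly $\{1,\ldots,m+1\}$ with the center receiving some $a$ and the leaves receiving the rest. If $2\leq a\leq m$, then both $a-1$ and $a+1$ lie in $[1,m+1]\setminus\{a\}$, hence are leaf labels summing to $2a$ and forming a triple; thus $a\in\{1,m+1\}$. If $a=m+1$, then $a$ is even (as $m$ is odd) and $a/2\in[1,m+1]\setminus\{a\}$ is a leaf forming an $(a,a/2)$-double; if $a=1$, then $2=2a$ is a leaf forming a $(2,1)$-double. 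Both remaining cases are impossible, so $m+1$ labels cannot suffice and $\chi_{td}(K_{1,m})=m+2$ for odd $m$. The delicate point throughout is tracking precisely when the forbidden values $2a$ and $a/2$ fall inside the label set, since that is exactly what separates the even and odd cases.
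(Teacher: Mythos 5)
Your proposal is correct and follows essentially the same route as the paper: the lower bound of $m+1$ via Proposition~\ref{prop:lowerbound}, the same explicit labelings (center $m+1$ or $m+2$, leaves $1,\ldots,m$) for the upper bounds, and the same three-way case analysis on the center's label ($a\in[2,m]$ gives a triple, $a=m+1$ and $a=1$ give doubles) for the lower bound when $m$ is odd. Your verification of the upper-bound labelings is somewhat more explicit than the paper's, but the underlying argument is identical.
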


\begin{proof}
    Consider the star $K_{1,m}$. Since $\text{diam}(K_{1,m})=2$, by Proposition~\ref{prop:lowerbound}, $\chi_{td}(K_{1,m})\geq m+1$. 
    
    We first consider the case where $m$ is even. Give $v_0$ the label $m+1$ and $v_i$ the label $i$ for $1\leq i\leq m$. Since $v_0$ has the greatest label and is odd and $v_1,\ldots,v_m$ have distinct labels, $K_{1,m}$ has no doubles or triples.
    
    If $m$ is odd, then $\chi_{td}(K_{1,m})\leq m+2$ if we label $v_0$ with $m+2$ and the rest as in the even case. Note that there are no doubles or triples using the same argument as before. (See Figure~\ref{fig:K_1,m_even} for the construction.) 
    
    To prove $\chi_{td}(K_{1,m}) \geq m+2$, we assume $\chi_{td}(K_{1,m}) = m+1$ by contradiction. In this case, each label in $[1,m+1]$ appears on exactly one $v_i$. Note that we cannot use any integer in $[2,m]$ to label $v_0$: in this case there must be two leaves whose labels, along with the label for $v_0$, will form a $(\ell+1,\ell,\ell-1)$-triple for some $\ell\in [2,m]$. We also cannot use $1$ or $m+1$ to label $v_0$ because we get a $(2,1)$- or $(m+1,\frac{m+1}{2})$-double. Thus, when $m$ is odd, $\chi_{td}(K_{1,m}) = m+2$.

\end{proof}

\begin{figure}
    \begin{tikzpicture}
    \draw[fill=black] (0,0) circle (2pt);
    \draw[fill=black] (3,2) circle (2pt);
    \draw[fill=black] (3,1) circle (2pt);
    \draw[fill=black] (3,0) circle (2pt);
    \draw[fill=black] (3,-0.5) circle (0.5pt);
    \draw[fill=black] (3,-1) circle (0.5pt);
    \draw[fill=black] (3,-1.5) circle (0.5pt);
    \draw[fill=black] (3,-2) circle (2pt);
    
    \draw[thick] (3,2) -- (0,0) --(3,1) -- (0,0) -- (3,0) -- (0,0) -- (3,-2);
    
    \node at (-0.7,0.35) {$m+1$};
    \node at (-0.7,0) {or};
    \node at (-0.7,-0.35) {$m+2$};
    \node at (3.3,2) {1};
    \node at (3.3,1) {2};
    \node at (3.3,0) {3};
    \node at (3.3,-2) {$m$};
    \end{tikzpicture}
    \caption{If $m$ is even then $\chi_{td}(K_{1,m})\leq m+1$, and if $m$ is odd then $\chi_{td}(K_{1,m})\leq m+2$.}
    \label{fig:K_1,m_even}
\end{figure}
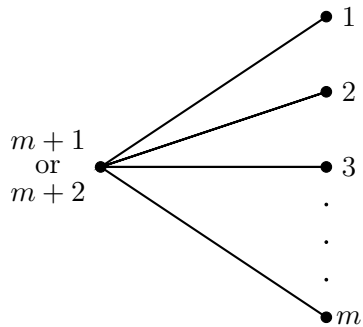

\begin{remark}
One can verify that if $m$ is even, $v_0$ must receive the label $m+1$, and that if $m$ is odd, $v_0$ must receive the label $1$ or $m+2$.
\end{remark}

We now determine the possible labels for $v_0$ in an $(m+r)$-total difference labeling of $K_{1,m}$ for $1\leq r\leq m$. We will use this result in Section~\ref{sec:trees} to provide an upper bound for the total difference chromatic number of general trees.

\begin{lemma}\label{lem:star-realize}
Consider an $(m+r)$-total difference labeling of the star $K_{1,m}$ with $1\leq r\leq m$. Then $v_0$ can have any label in the set $[1,r-1]\cup[m+2,m+r]$. If $m$ is even or if $m$ is odd and $r=\frac{m+3}{2}$, $v_0$ can additionally receive the label $m+1$.

\end{lemma}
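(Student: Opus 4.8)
The plan is to use Lemma~\ref{lem:doublesequence} to reduce everything to a statement about vertex labels: assigning the center $v_0$ the label $a$ and the leaves $m$ distinct labels yields an $(m+r)$-total difference labeling precisely when the induced labeling has no doubles and no triples and attains maximum label $m+r$. Since every edge of $K_{1,m}$ is incident to $v_0$, a double is exactly a leaf labeled $2a$ or $a/2$, and a triple is exactly a pair of leaves $x,y$ with $x+y=2a$ (equal leaf labels being the degenerate case $x=y$, which forces all leaf labels to be distinct). Thus I must choose a set $L$ of $m$ distinct labels in $[1,m+r]\setminus\{a\}$ containing neither $2a$ nor $a/2$ and containing no two elements summing to $2a$; and, because an $(m+r)$-total difference labeling must attain the value $m+r$ while edge labels never exceed $m+r-1$, the set $L\cup\{a\}$ must contain $m+r$.

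The engine is the involution $x\mapsto 2a-x$, which partitions $[1,m+r]\setminus\{a\}$ into symmetric pairs $\{a-t,a+t\}$ (those $t$ for which both coordinates lie in range) together with unpaired labels whose mirror image falls outside $[1,m+r]$. A valid $L$ may contain at most one member of each pair, so I will build $L$ by keeping the larger member of every pair and every unpaired label, then deleting the forbidden doubles $2a$ and $a/2$, and finally trimming down to exactly $m$ elements while retaining the label $m+r$. Counting the survivors gives $m+r-a-1$ available labels when $a\in[1,r-1]$ (here the unpaired labels are the large ones in $[2a,m+r]$, and $a/2$ lies among the discarded small pair-members) and either $a-1$ or $a-2$ available labels when $a\in[m+2,m+r]$ (according to whether $a/2$ is an unpaired small label that must be deleted). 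Both counts are at least $m$ throughout the stated ranges, and in each case the surviving set still contains $m+r$ (or $a=m+r$ is itself the maximum), so trimming to $m$ leaves a valid labeling.

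The delicate point is the additional label $a=m+1$, where the count is tight. When $m$ is even, $a=m+1$ is odd, so the double $a/2$ does not arise; the survivors are $[1,m+1-r]\cup[m+2,m+r]$, exactly $m$ labels including $m+r$, and we are done. When $m$ is odd, $a=m+1$ is even and the double $a/2=\tfrac{m+1}{2}$ must be avoided. The key computation is that the mirror image of $a/2$ is $3a/2=\tfrac{3(m+1)}{2}$, which lies in $[1,m+r]$ if and only if $r\ge\tfrac{m+3}{2}$; precisely when $r=\tfrac{m+3}{2}$ one gets $3a/2=m+r$, so $a/2$ is the small member of the pair $\{\tfrac{m+1}{2},\,m+r\}$ and is automatically discarded by the ``keep the larger member'' rule, leaving exactly $m$ survivors that include $m+r$. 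This is the main obstacle: one must verify that the single forbidden double is absorbed by the pairing rather than costing an extra label, and this is exactly what pins down the special value $r=\tfrac{m+3}{2}$.
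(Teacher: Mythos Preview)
Your proposal is correct and follows essentially the same approach as the paper. Both arguments reduce via Lemma~\ref{lem:doublesequence} to counting how many leaf labels in $[1,m+r]\setminus\{a\}$ survive the constraints, and both hinge on the same pairing $\{a-t,a+t\}$: the paper phrases this as counting triples $(a+i,a,a-i)$ and doubles that must be avoided, while you phrase it as the involution $x\mapsto 2a-x$ and keep the larger member of each orbit. The arithmetic is identical, and your treatment of the boundary case $a=m+1$ with $m$ odd---observing that $a/2=\tfrac{m+1}{2}$ is the small member of the pair $\{\tfrac{m+1}{2},m+r\}$ precisely when $r=\tfrac{m+3}{2}$, so it is absorbed by the pairing rule rather than costing an extra deletion---matches the paper's observation that the double and one of the triples share the label $\tfrac{m+1}{2}$ in exactly this case.
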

\begin{proof}
Throughout this proof, we let $\ell$ be the label given to $v_0$. The greatest label on a vertex in $K_{1,m}$ must be $m+r$ since we are considering $(m+r)$-total difference labelings of $K_{1,m}$. Since $K_{1,m}$ has $m+1$ vertices, which require $m+1$ distinct labels from $[1,m+r]$, we must use all but $r-1$ integers in $[1,m+r-1]$. We look at four cases, based on the possible values of $\ell$, namely: $\ell  \in [1,r-1]$, $\ell  = r$, $\ell \in [r+1,m]$, and $\ell \in [m+1, m+r]$. Note that if $r=1$, we have only three distinct cases.

If $1\leq\ell\leq r-1$, then $\ell$ could be the middle value in the $\ell-1$ triples of the form $(\ell+i,\ell,\ell-i)$ where $1\leq i\leq \ell-1$. Additionally, $\ell$ could form a double with $2\ell$ or with $\frac{\ell}{2}$ if $\ell$ is even.  Therefore there are $\ell$ doubles and triples which include $\ell$ but are disjoint otherwise. (Notice that the $(\ell,\frac{\ell}{2})$-double is not disjoint with the $(\frac{3\ell}{2},\ell,\frac{\ell}{2})$-triple). Since $\ell\leq r-1$, we must avoid using at most $r-1$ integers, as desired. Therefore, $v_0$ can be labeled with any integer in $[1,r-1]$.

Suppose $\ell=r$, then again $\ell$ could be the middle value in the same $\ell-1$ triples (of the form $(\ell+i,\ell, \ell-i)$ as above) with the integers in $[1,m+r]$, 
and a double with $ 2\ell = 2r \leq m+r$. Therefore, we have $\ell=r$ labels that cannot be used, giving us only $m$ possible labels for $m+1$ vertices. So, $r$ cannot be used to label $v_0$. 

 Assume $r+1\leq \ell \leq m$. Then $\ell$ could be the middle value in $r$ triples (and perhaps more) with the integers in $[1,m+r]$, namely, $(\ell+i,\ell,\ell-i)$ where $1\leq i\leq r$. As in the previous case, $v_0$ cannot be labeled using any element of $[r+1,m]$.

If $m+1 \leq \ell \leq m+r$ there are $m+r-\ell$ triples possible: $(\ell+i,\ell,\ell-i)$ for $i\in[1,m+r-\ell]$. Therefore, if $\ell = m+ j$ for $j\in[1,r]$, there are $r-j$ possible triples. In addition to these triples, $\ell$ may also be part of a double. Therefore we must avoid $r-j+1$ labels for $j\in[1,r]$. If $j>1$ then we are able to label $v_0$ with $m+j$ as there are at most $r-1$ forbidden labels. If $j=1$ and $m$ is even, $\ell$ could not be part of a double as it is odd, and hence $\ell$ could be $m+1$. If $m$ is odd we could have up to $r$ forbidden labels ($r-1$ triples and a double) and so $\ell$ could not be $m+1$. In all of these cases except when $r=\frac{m+3}{2}$, the triples and double are disjoint and hence $\ell$ cannot be $m+1$. The exception is described in the next paragraph.

Consider the case where $m$ is odd and $r=\frac{m+3}{2}$. Then for $\ell = m+1$, we will have $r-1$ triples of the form ($\ell+i,\ell,\ell-i$) with $i$ in $[1,r-1]$, or equivalently in $[1,\frac{m+1}{2}]$. We now consider doubles. Notice that the $(\ell,\frac{\ell}{2})$-double (equivalently, $(m+1,\frac{m+1}{2})$-double) and the triple $(\ell+i,\ell,\ell-i)$ with $i=\frac{m+1}{2}$ both contain the label $\frac{m+1}{2}$. We can avoid both the double and triple simply by not using the label $\frac{m+1}{2}$, hence the possible double does not give us an additional label that must be avoided. It is also easy to check that the $(2\ell,\ell)$-double is not possible since $2\ell=2m+2>\frac{3m+3}{2}=m+\frac{m+3}{2}=m+r$ exceeds our bound. Hence, $v_0$ can have label $m+1$ as there are only $r-1$ possible violations of Lemma~\ref{lem:doublesequence}.
\end{proof}

\section{Wheels, gears, and helms}\label{sec:wheels}

We create the wheel, $W_n$, from the cycle $C_{n-1}$ by adding a vertex adjacent to all other vertices in the cycle. We use Theorem~\ref{thm:star} to determine the total difference chromatic number of wheels. 

\begin{theorem}\label{thm:wheels}
For $n\geq 4$, $\chi_{td}(W_n)=\chi_{td}(K_{1,n})$ except when $n$ is $4$ or $5$. Explicitly,
\[ \chi_{td}(W_n)=\begin{cases}
8 & n=4 \\
    7 & n=5 \\
    n+1 & n \text{ is even and } n\geq 6\\
    n &  n \text{ is odd and } n\geq 7.
\end{cases}\]
\end{theorem}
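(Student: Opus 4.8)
The plan is to pin down the two bounds separately, deriving the lower bound from the spanning star and building an explicit rim labeling for the upper bound, and then to treat $n=4,5$ by hand. For the lower bound, observe that the hub together with the $n-1$ rim vertices forms a spanning star $K_{1,n-1}$ (forget the rim edges). By Proposition~\ref{prop:subgraph} and Theorem~\ref{thm:star}, $\chi_{td}(W_n)\ge\chi_{td}(K_{1,n-1})$, which equals $n$ when $n$ is odd (so $n-1$ is even) and $n+1$ when $n$ is even (so $n-1$ is odd). Since $\text{diam}(W_n)=2$, Proposition~\ref{prop:lowerbound} also gives $\chi_{td}(W_n)\ge n$ and forces all vertex labels to be distinct. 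For $n\ge 6$ these lower bounds already match the claimed values, so the remaining work is to produce matching constructions; the exceptional cases $n=4,5$ will need a stronger lower bound, proved directly.

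For the upper bound I would first fix the hub label to be the largest label used ($n$ in the odd case, $n+1$ in the even case). This choice is essentially forced: the remark following Theorem~\ref{thm:star} shows the hub of a minimal star labeling must carry the top label when $n-1$ is even, and Lemma~\ref{lem:star-realize} with $r=2$ shows the hub may carry the top label $n+1$ when $n-1$ is odd. Crucially, putting the maximum on the hub kills every triple centered at the hub, since two rim labels can be equidistant from the hub label $\ell$ only if they are symmetric about $\ell$, i.e. sum to $2\ell$, which is impossible once $\ell$ exceeds every rim label. By Lemma~\ref{lem:doublesequence} and Remark~\ref{rem:onlyvertexlabels} it then remains to arrange the remaining labels around the rim cycle $C_{n-1}$ (in the even case also choosing which single label of $[1,n]$ to omit) so as to avoid only three local conflicts: rim \emph{doubles} (rim-adjacent labels in ratio $2$); rim \emph{triples}, i.e. three consecutive rim labels in arithmetic progression; and \emph{spoke triples}, where a rim vertex labeled $b$ is rim-adjacent to the label $2b-\ell$ (the unique value making an incident rim edge and the spoke at that vertex collide).

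The heart of the argument, and the step I expect to be the main obstacle, is exhibiting one explicit cyclic ordering of the rim labels that simultaneously dodges all three conflict types uniformly for every large $n$ of each parity. Equivalently, on the ``allowed-adjacency'' graph (all pairs of remaining labels except those forming a double or a spoke conflict) one must find a Hamiltonian cycle no three of whose consecutive entries are in arithmetic progression; for small $n$ this graph is already fairly sparse, which is exactly why a clean closed-form pattern is delicate to state and verify. I would give such a pattern explicitly—for instance by interleaving low and high labels so that consecutive differences alternate in size, suppressing the arithmetic-progression and ratio-$2$ coincidences at once—and then check the finitely many forbidden configurations mechanically via Remark~\ref{rem:onlyvertexlabels}.

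Finally, $W_4=K_4$ and $W_5$ are genuinely exceptional because their short rim cycles over-constrain the admissible adjacencies. For example, with only $5$ labels available for $W_5$ the hub is forced to be $5$, and the forbidden doubles and spoke triples leave too few admissible rim edges to close a $4$-cycle, so strictly more than $5$ labels are required; a short case check then pins the minimum at $7$, and likewise the extra mutual adjacencies of $K_4$ force $8$. I would dispatch both by giving an explicit optimal labeling for the upper bound and a brief exhaustive argument for the matching lower bound.
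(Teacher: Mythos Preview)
Your plan is correct and mirrors the paper's proof almost exactly: the lower bound via the spanning star $K_{1,n-1}$, the hub receiving the maximum label, an interleaved rim ordering for the general construction, and hand-checked exceptional cases $n=4,5$ (with $n=6,7$ done by explicit pictures). The paper's explicit rim pattern for odd $n\ge 7$ is precisely the kind of interleaving you anticipate---odd labels $1,3,5,\dots,n-2$ at odd rim positions and even labels $n-1,2,4,\dots,n-3$ at even rim positions---so the only thing left in your outline is to write down and verify that one formula.
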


\begin{proof}
We denote by $v_0$ the vertex with degree $n-1$ and the remainder of the vertices by $v_1,\ldots,v_{n-1}$ in this cyclic order. Notice that $K_{1,n-1}$ is a subgraph of $W_n$ so Proposition~\ref{prop:subgraph} implies $\chi_{td}(W_n) \geq \chi_{td}(K_{1,n-1})$.

Suppose $n=4$. Note that in $W_4$, all vertices are adjacent to each other, so $W_4 = K_4$. We obtain a total difference labeling of $W_4$ by labeling the vertices with $1,5,7,8$ (see the graph in the top left in Figure~\ref{fig:w4w5w6w7}). It is straightforward to show that $\chi_{td}(W_4)\geq 8$ by case analysis. 

For $W_5$, we label $v_0$ with $7$ and $v_1,v_2,v_3,v_4$ with $1,3,2,5$, respectively. The graph in the top right in Figure~\ref{fig:w4w5w6w7} shows this construction. 

Since $W_5$ has diameter 2, we use Proposition~\ref{prop:lowerbound} to see that $5 \leq \chi_{td}(W_5)\leq 7$. We will now show that it cannot be 5 or 6.
Suppose that $\chi_{td}(W_5) = 5$. If $v_0$ is labeled with 2, 3, or 4, then a triple must be formed. If $v_0$ gets the label 1, it must be adjacent to the vertex that gets labeled with 2. Finally, if $v_0$ is labeled with 5, one of the vertices with label 1 or 4 must be adjacent to the vertex with label 2. We can show similarly that $\chi_{td}(W_5)\neq 6$ by ruling out possible labels for $v_0$. 

\begin{figure}
    \begin{tikzpicture}
    \draw[fill=black] (0,0) circle (2pt);
    \draw[fill=black] (0,1.6) circle (2pt);
    \draw[fill=black] (-1.5,-1)  circle (2pt);
    \draw[fill=black] (1.5,-1) circle (2pt);

    \draw[thick] (0,1.6) -- (0,0) -- (-1.5,-1) -- (0,1.6) -- (1.5,-1) -- (0,0) -- (1.5,-1) -- (-1.5,-1);
    
    \node at (0,-0.25) {8};
    \node at (0,1.9) {1};
    \node at (-1.8,-1) {7};
    \node at (1.8,-1) {5};

    \draw[fill=black] (6,0.3) circle (2pt);
    \draw[fill=black] (4.7,1.6) circle (2pt);
    \draw[fill=black] (7.3,1.6) circle (2pt);
    \draw[fill=black] (4.7,-1) circle (2pt);
    \draw[fill=black] (7.3,-1) circle (2pt);

    \draw[thick] (4.7,1.6)--(7.3,1.6) -- (7.3,-1) -- (4.7,-1) -- (4.7,1.6) -- (6,0.3) -- (4.7,-1) -- (7.3,1.6) -- (6,0.3) -- (7.3,-1);
    \node at (6,0.6)  {7};
    \node at (4.7,1.9) {1};
    \node at (7.3,1.9) {3};
    \node at (7.3,-1.3) {2};
    \node at (4.7,-1.3) {5};
    
    \draw[fill=black] (0,-4) circle (2pt);
    \draw[fill=black] (0,-2.5) circle (2pt);
    \draw[fill=black] (1.5,-3.6) circle (2pt);
    \draw[fill=black] (-1.5,-3.6) circle (2pt);
    \draw[fill=black] (-1,-5.3) circle (2pt);
    \draw[fill=black] (1,-5.3) circle (2pt);

    \draw[thick] (0,-4)--(0,-2.5)--(1.5,-3.6)--(1,-5.3)--(-1,-5.3)--(-1.5,-3.6)--(0,-2.5);
    \draw[thick] (1.5,-3.6)--(0,-4)--(1,-5.3);
    \draw[thick] (-1,-5.3)--(0,-4)--(-1.5,-3.6);
    \node at (0.2,-3.7) {1};
    \node at (0,-2.25) {3};
    \node at (1.75,-3.6) {4};
    \node at (-1.75,-3.6) {7};
    \node at (-1,-5.6) {5};
    \node at (1,-5.6) {6};

    \draw[fill=black] (6,-4) circle (2pt);
    \draw[fill=black] (7.5,-4) circle (2pt);
    \draw[fill=black] (4.5,-4) circle (2pt);
    \draw[fill=black] (6.75,-2.7) circle (2pt);
    \draw[fill=black] (5.25,-2.7) circle (2pt);
    \draw[fill=black] (6.75,-5.3) circle (2pt);
    \draw[fill=black] (5.25,-5.3) circle (2pt);

    \draw[thick] (6,-4)--(7.5,-4)--(6.75,-5.3)--(5.25,-5.3)--(4.5,-4)--(5.25,-2.7)--(6.75,-2.7)--(7.5,-4);
    \draw[thick](6.75,-5.3)--(6,-4)--(5.25,-5.3);
    \draw[thick] (4.5,-4)--(6,-4) -- (5.25,-2.7);
    \draw[thick] (6,-4) -- (6.75,-2.7);
    \node at (6.35,-3.8) {7};
    \node at (7.75,-4) {3};
    \node at (4.25,-4) {4};
    \node at (6.75,-2.4) {2};
    \node at (5.25,-2.4) {6};
    \node at (6.75,-5.6) {1};
    \node at (5.25,-5.6) {5};

    \end{tikzpicture}
    \caption{A construction that shows $\chi_{td}(W_4)\leq 8$ and $\chi_{td}(W_5), \chi_{td}(W_6), \chi_{td}(W_7) \leq 7$.}
    \label{fig:w4w5w6w7}
\end{figure}
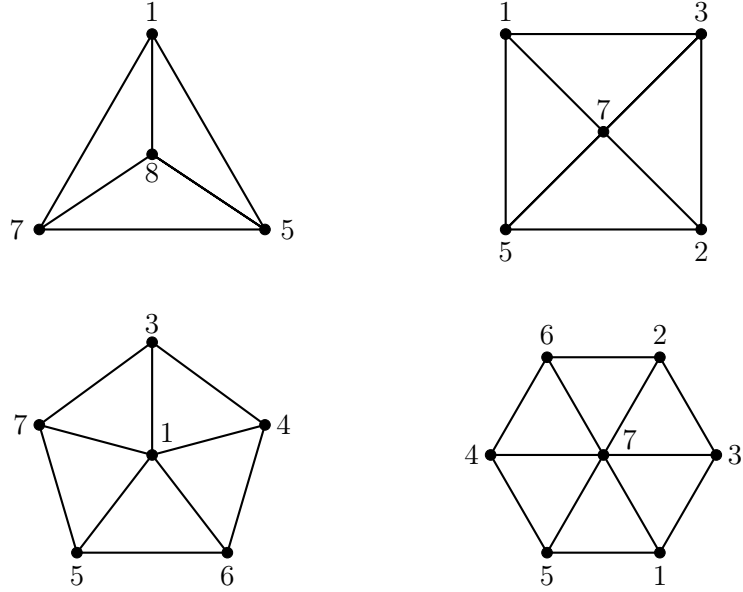
For $\chi_{td}(W_6)$ and $\chi_{td}(W_7)$, constructions are provided in the bottom two graphs in Figure~\ref{fig:w4w5w6w7}. Notice that we do indeed get that $\chi_{td}(W_6)=\chi_{td}(W_7)=7$ due to Theorem~\ref{thm:star} about the total difference chromatic number of stars and Proposition~\ref{prop:subgraph}.

We construct total difference labelings of $W_n$ for the general case, in which $n\geq 8$. If $n$ is odd, we label $v_0$ with $n$ and the vertices $v_1, v_3, v_5,\ldots,v_{n-2}$ with the consecutive odd integers $1, 3, 5, \ldots, n-2$, respectively. We label $v_2, v_4, v_6, \ldots, v_{n-1}$ with $n-1, 2, 4, 6, \ldots, n-3$, respectively. See Figure~\ref{fig:w_odd} for the construction.

Notice that, except for vertices $v_2$ and $v_{n-1}$, vertices with even labels are adjacent to vertices odd labels greater than their own. This immediately shows that these vertices cannot be in a double, and can be used to show, along with Proposition~\ref{prop:lowerbound}, that none of these vertices can be in a triple. It is then straightforward to check the remaining cases: that no triple can involve $v_2$, $v_{n-1}$, or only vertices with odd labels.

 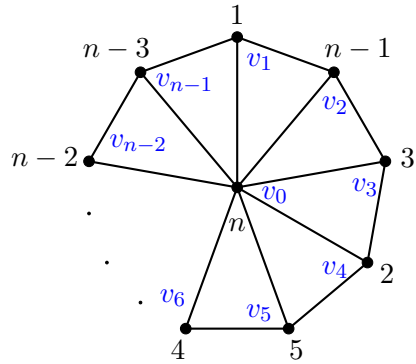
\begin{figure}
     \begin{tikzpicture}
     \draw[fill=black] (0,0)  circle (2pt);
     \draw[fill=black] (0,2)  circle (2pt);
     \draw[fill=black] (1.2856,1.532)  circle (2pt);
     \draw[fill=black] (1.97,0.347)  circle (2pt);
     \draw[fill=black] (1.732,-1)  circle (2pt);
     \draw[fill=black] (0.684,-1.879)  circle (2pt);
     \draw[fill=black] (-0.684,-1.879)  circle (2pt);
     \draw[fill=black] (-1.732,-1)  circle (0.5pt);
     \draw[fill=black] (-1.2856,-1.532)  circle (0.5pt);
     \draw[fill=black] (-1.97,-0.347)  circle (0.5pt);
     \draw[fill=black] (-1.97,0.347)  circle (2pt);
     \draw[fill=black] (-1.2856,1.532)  circle (2pt);
     
     \draw[thick] (0,2)--(1.2856,1.532)--(1.2856,1.532)--(1.97,0.347)--(1.732,-1)--(0.684,-1.879)--(-0.684,-1.879);
     \draw[thick] (0,2)--(-1.2856,1.532)--(-1.97,0.347);
     \draw[thick] (0,0)--(0,2);
     \draw[thick] (0,0)--(1.2856,1.532);
     \draw[thick] (0,0)--(1.97,0.347);
     \draw[thick] (0,0)--(1.732,-1);
     \draw[thick] (0,0)--(0.684,-1.879);
     \draw[thick] (0,0)--(-0.685,-1.879);
     \draw[thick] (0,0)--(-1.2856,1.532);
     \draw[thick] (0,0)--(-1.97,0.347);

     \node[blue] at (0.5,-0.1) {$v_0$};
     \node[blue] at (0.3,1.674) {$v_1$};
     \node[blue] at (1.3,1.093) {$v_2$};
     \node[blue] at (1.7,0) {$v_3$};
     \node[blue] at (1.3,-1.093) {$v_4$};
     \node[blue] at (0.295,-1.67) {$v_5$};
     \node[blue] at (-0.85,-1.47) {$v_6$};
     \node[blue] at (-1.3155,0.581) {$v_{n-2}$};
     \node[blue] at (-0.7,1.386) {$v_{n-1}$};
     
     \node at (0,-0.5) {$n$};
     \node at (0,2.3) {$1$};
     \node at (2.265,0.399) {$3$};
     \node at (0.7866,-2.1613) {$5$};
     \node at (-2.56,0.3994) {$n-2$};
     \node at (1.607,1.839) {$n-1$};
     \node at (1.992,-1.15) {$2$};
     \node at (-0.7866,-2.1613) {$4$};
     \node at (-1.607,1.838) {$n-3$};
     
     \end{tikzpicture}
     \caption{A construction that shows $\chi_{td}(W_n) \leq n$ when $n$ is odd and $n\geq7$.}
     \label{fig:w_odd}
 \end{figure}

If $n$ is even, our construction is nearly identical: vertices $v_1,v_3,v_5,\ldots,v_{n-1}$ are labeled with $1,3,5,\ldots,n-1$, respectively and $v_2,v_4,v_6,\ldots,v_{n-2}$ with $n-2, 2, 4, 6,\ldots,n-4$. The only difference is that $v_0$ gets the label $n+1$. Appealing again to Theorem~\ref{thm:star} and Proposition~\ref{prop:subgraph} completes the proof. 
\end{proof}

We turn our attention to two families of graphs related to wheels: gears and helms. For $n\geq 4$, the gear $G_n$ is formed by subdividing each edge on the outer circuit of $W_n$ into two edges. The helm $H_n$ is created from $W_n$ by appending a leaf to each vertex on the outer circuit of $W_n$. See Figure~\ref{fig:g4g5g6g7} for several examples of gears and Figure~\ref{fig:h7} for an example of a helm.

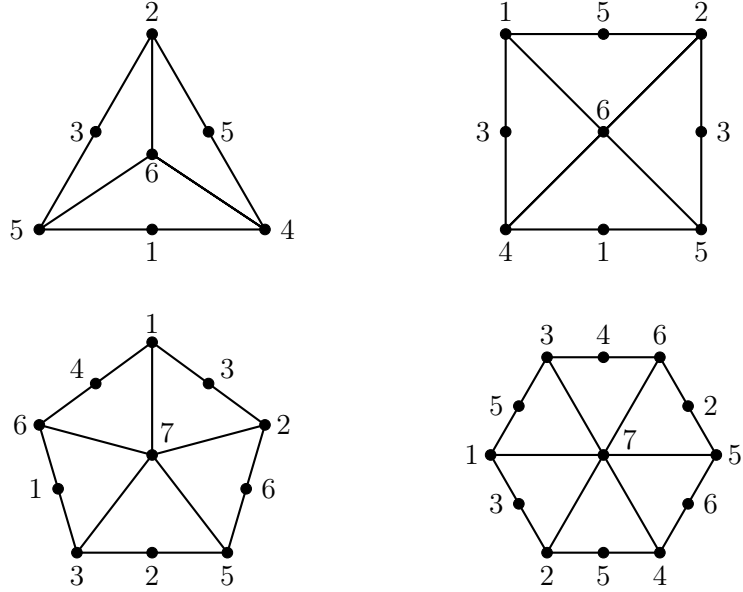
\begin{figure}
    \begin{tikzpicture}
    \draw[fill=black] (0,0) circle (2pt);
    \draw[fill=black] (0,1.6) circle (2pt);
    \draw[fill=black] (-1.5,-1)  circle (2pt);
    \draw[fill=black] (1.5,-1) circle (2pt);
    \draw[fill=black] (-0.75,0.3) circle (2pt);
    \draw[fill=black] (0.75,0.3) circle (2pt);
    \draw[fill=black] (0,-1) circle (2pt);
    
    \draw[thick] (0,1.6) -- (0,0) -- (-1.5,-1) -- (0,1.6) -- (1.5,-1) -- (0,0) -- (1.5,-1) -- (-1.5,-1);
    
    \node at (0,-0.25) {6};
    \node at (0,1.9) {2};
    \node at (-1.8,-1) {5};
    \node at (1.8,-1) {4};
    \node at (-1,0.3) {3};
    \node at (1,0.3) {5};
    \node at (0,-1.3) {1};

    \draw[fill=black] (6,0.3) circle (2pt);
    \draw[fill=black] (4.7,1.6) circle (2pt);
    \draw[fill=black] (7.3,1.6) circle (2pt);
    \draw[fill=black] (4.7,-1) circle (2pt);
    \draw[fill=black] (7.3,-1) circle (2pt);
    \draw[fill=black] (6,1.6) circle (2pt);
    \draw[fill=black] (6,-1) circle (2pt);
    \draw[fill=black] (7.3,0.3) circle (2pt);
    \draw[fill=black] (4.7,0.3) circle (2pt);
    
    \draw[thick] (4.7,1.6)--(7.3,1.6) -- (7.3,-1) -- (4.7,-1) -- (4.7,1.6) -- (6,0.3) -- (4.7,-1) -- (7.3,1.6) -- (6,0.3) -- (7.3,-1);
    \node at (6,0.6)  {6};
    \node at (4.7,1.9) {1};
    \node at (7.3,1.9) {2};
    \node at (7.3,-1.3) {5};
    \node at (4.7,-1.3) {4};
    \node at (6,1.9) {5};
    \node at (6,-1.3) {1};
    \node at (4.4,0.3) {3};
    \node at (7.6,0.3) {3};
    
    \draw[fill=black] (0,-4) circle (2pt);
    \draw[fill=black] (0,-2.5) circle (2pt);
    \draw[fill=black] (1.5,-3.6) circle (2pt);
    \draw[fill=black] (-1.5,-3.6) circle (2pt);
    \draw[fill=black] (-1,-5.3) circle (2pt);
    \draw[fill=black] (1,-5.3) circle (2pt);
    \draw[fill=black] (0,-5.3) circle (2pt);
    \draw[fill=black] (-1.25,-4.45) circle (2pt);
    \draw[fill=black] (1.25,-4.45) circle (2pt);
    \draw[fill=black] (0.75,-3.05) circle (2pt);
    \draw[fill=black] (-0.75,-3.05) circle (2pt);
    \draw[thick] (0,-4)--(0,-2.5)--(1.5,-3.6)--(1,-5.3)--(-1,-5.3)--(-1.5,-3.6)--(0,-2.5);
    \draw[thick] (1.5,-3.6)--(0,-4)--(1,-5.3);
    \draw[thick] (-1,-5.3)--(0,-4)--(-1.5,-3.6);
    \node at (0.2,-3.7) {7};
    \node at (0,-2.25) {1};
    \node at (1.75,-3.6) {2};
    \node at (-1.75,-3.6) {6};
    \node at (-1,-5.6) {3};
    \node at (1,-5.6) {5};
    \node at (0,-5.6) {2};
    \node at (1.55,-4.45) {6};
    \node at (-1.55,-4.45) {1};
    \node at (1,-2.85) {3};
    \node at (-1,-2.85) {4};
    
    \draw[fill=black] (6,-4) circle (2pt);
    \draw[fill=black] (7.5,-4) circle (2pt);
    \draw[fill=black] (4.5,-4) circle (2pt);
    \draw[fill=black] (6.75,-2.7) circle (2pt);
    \draw[fill=black] (5.25,-2.7) circle (2pt);
    \draw[fill=black] (6.75,-5.3) circle (2pt);
    \draw[fill=black] (5.25,-5.3) circle (2pt);
    \draw[fill=black] (6,-2.7) circle (2pt);
    \draw[fill=black] (6,-5.3) circle (2pt);
    \draw[fill=black] (7.125,-4.65) circle (2pt);
    \draw[fill=black] (4.875,-4.65) circle (2pt);
    \draw[fill=black] (7.125,-3.35) circle (2pt);
    \draw[fill=black] (4.875,-3.35) circle (2pt);
    
    \draw[thick] (6,-4)--(7.5,-4)--(6.75,-5.3)--(5.25,-5.3)--(4.5,-4)--(5.25,-2.7)--(6.75,-2.7)--(7.5,-4);
    \draw[thick](6.75,-5.3)--(6,-4)--(5.25,-5.3);
    \draw[thick] (4.5,-4)--(6,-4) -- (5.25,-2.7);
    \draw[thick] (6,-4) -- (6.75,-2.7);
    \node at (6.35,-3.8) {7};
    \node at (7.75,-4) {5};
    \node at (4.25,-4) {1};
    \node at (6.75,-2.4) {6};
    \node at (5.25,-2.4) {3};
    \node at (6.75,-5.6) {4};
    \node at (5.25,-5.6) {2};
    \node at (6,-2.4) {4};
    \node at (6,-5.6) {5};
    \node at (7.425,-3.35) {2};
    \node at (4.575,-3.35) {5};
    \node at (7.425,-4.65) {6};
    \node at (4.575,-4.65) {3};
    
    \end{tikzpicture}
    \caption{Constructions that show $\chi_{td}(G_4),  \chi_{td}(G_5)\leq6$ and $\chi_{td}(G_6), \chi_{td}(G_7)\leq8$.}
    \label{fig:g4g5g6g7}
\end{figure}

\begin{theorem}
For gears $G_n$ with $n\geq 4$, \[\chi_{td}(G_n) = \left\{\begin{array}{ll}
    6 & n=4,5\\
    n+1 & n \text{ is even and } n\geq 6\\
    n &  n \text{ is odd and } n\geq 7. 
\end{array}\right.\]
\end{theorem}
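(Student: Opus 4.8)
The plan is to establish matching lower and upper bounds, treating the small cases $n=4,5$ separately from the generic behavior for $n\ge 6$. Write $v_0$ for the center, $u_1,\dots,u_{n-1}$ for the hub vertices (the original vertices of the outer circuit of $W_n$, each still joined to $v_0$), and $w_i$ for the subdivision vertex inserted between $u_i$ and $u_{i+1}$, so that each $u_i$ has degree $3$ and each $w_i$ has degree $2$; by Remark~\ref{rem:onlyvertexlabels} it suffices throughout to exhibit vertex labels containing no double or triple. The decisive observation for the lower bound is that $v_0$ together with $u_1,\dots,u_{n-1}$ induces a copy of $K_{1,n-1}$, so by Proposition~\ref{prop:subgraph} and Theorem~\ref{thm:star} we have $\chi_{td}(G_n)\ge\chi_{td}(K_{1,n-1})$, which equals $n$ when $n$ is odd and $n+1$ when $n$ is even. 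This already matches the claimed value for every $n\ge 6$, leaving only a construction to supply in that range.

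For $n=4,5$ the star bound gives only $5$, and I would upgrade it to $6$ by contradiction: assume a $5$-total difference labeling exists. Restricting to the subgraph $K_{1,n-1}$ and applying the remark following Theorem~\ref{thm:star} (equivalently Lemma~\ref{lem:star-realize} with $m+r=5$) pins down the label of $v_0$---it is forced to be $5$ when $n=5$ (here $m=4$ is even) and to be $1$ or $5$ when $n=4$ (here $m=3$ is odd)---and forces the hub labels to be distinct and double-free with $v_0$. In each admissible case I would then examine a hub $u_i$ with its three incident edges (one to $v_0$, the others to $w_{i-1}$ and $w_i$) and show that keeping these three edge labels distinct and double-free simultaneously all the way around the outer circuit is impossible, giving $\chi_{td}(G_n)\ge 6$. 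This is a finite but fussy check.

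For the upper bound, the cases $n=4,5,6,7$ are settled by the explicit labelings of Figure~\ref{fig:g4g5g6g7}, each verified via Lemma~\ref{lem:doublesequence}. For $n\ge 8$ I would give a uniform labeling modeled on the wheel construction of Theorem~\ref{thm:wheels}: set $v_0=n$ for $n$ odd and $v_0=n+1$ for $n$ even, assign the hubs distinct labels from the remaining values so that $v_0$ and the hubs form a valid star (all hub labels distinct, none equal to $v_0$, none forming a double with it), and label each subdivision vertex $w_i$ by an explicit rule; one then checks via Lemma~\ref{lem:doublesequence} that no double or triple arises. The main obstacle is precisely the placement of these subdivision labels: in the odd case $v_0$ and the hubs already use all $n$ available labels, so each $w_i$ must reuse a label while still yielding three distinct, double-free edge labels at each adjacent degree-$3$ hub, whereas at the degree-$2$ vertices one needs only that $w_i$ avoid the average of $u_i$ and $u_{i+1}$ and avoid a double with either neighbor. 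Finding a single rule that satisfies the hub constraints around the entire circuit---and confirming it against Lemma~\ref{lem:doublesequence}---is the heart of the proof; by comparison, the lower bound for $n\ge 6$ is immediate and the refinement for $n=4,5$ is a bounded computation.
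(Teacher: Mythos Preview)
Your overall strategy matches the paper's: the lower bound for $n\ge 6$ from the $K_{1,n-1}$ subgraph via Proposition~\ref{prop:subgraph} and Theorem~\ref{thm:star}, the upper bounds for $n\le 7$ from Figure~\ref{fig:g4g5g6g7}, and a direct construction for $n\ge 8$ with the center labeled $n$ or $n+1$ according to parity. You are also right to note that for $n=4,5$ the star bound yields only $5$, so a separate argument is needed to reach $6$; the paper in fact glosses over this point, and your proposed case analysis is exactly what is missing there.

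The genuine gap in your proposal is the construction for $n\ge 8$: you describe what the subdivision labels must accomplish and call this ``the heart of the proof,'' but you never produce a rule. The paper's solution differs from the wheel labeling you suggest adapting: since hubs are pairwise non-adjacent in $G_n$, there is no need to alternate parities as in $W_n$, and the paper simply labels the hubs $u_1,\dots,u_{n-1}$ with $1,2,\dots,n-1$ in order. The subdivision vertex between the hubs labeled $j$ and $j+1$ then receives the label $j+2$ for $3\le j\le n-3$, with the four remaining subdivision vertices handled individually (labels $n-2$, $n-1$, $2$, $3$). This rule is easy to state and not hard to verify against Lemma~\ref{lem:doublesequence}, but without something concrete of this kind your upper-bound argument is an outline rather than a proof.
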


\begin{proof}
The constructions in Figure~\ref{fig:g4g5g6g7} show that the theorem holds when $4\leq n\leq 7$. Observe that $K_{1,n-1}$ is a subgraph of $G_n$, so that $\chi_{td}(G_n)\geq\chi_{td}(K_{1,n-1})$ by Proposition~\ref{prop:subgraph}. Theorem~\ref{thm:star} implies that $\chi_{td}(K_{1,n-1})=n$ if $n$ is odd and $\chi_{td}(K_{1,n-1})=n+1$ if $n$ is even so it remains to show by construction that $\chi_{td}(G_n)\leq\chi_{td}(K_{1,n-1})$ for $n\geq 8$. We break up the construction into two cases based upon the parity of $n$.   

First suppose that $n$ is even and $n\geq 8$. We let $v_0$ be the vertex of degree $n-1$ in $G_n$. We denote the remaining vertices by $v_1,v_2,\ldots,v_{2n-2}$ in this cyclic order, choosing $v_1$ to be any vertex adjacent to $v_0$. Notice that $v_{2i}$ has degree 2 while $v_{2i-1}$ has degree 3 for $1\leq i\leq n-1$. We label $v_0$ with $n+1$ and $v_1,v_3,\ldots,v_{2n-3}$ with $1,2,\ldots, n-1$, respectively. We then label $v_2,v_4,\ldots, v_{2n-2}$ with $n-2,n-1,5,6,\ldots,n-1,2,3$, respectively. It is straightforward to check that this does indeed give an $(n+1)$-total difference labeling of $G_n$ when $n\geq 8$ is even. This construction is shown in Figure~\ref{fig:g_even}.

If $n$ is odd and $n\geq 9$, we use a similar construction to show that $\chi_{td}(G_n)=n$: the only difference is that $v_0$ gets label $n$ rather than $n+1$. 

 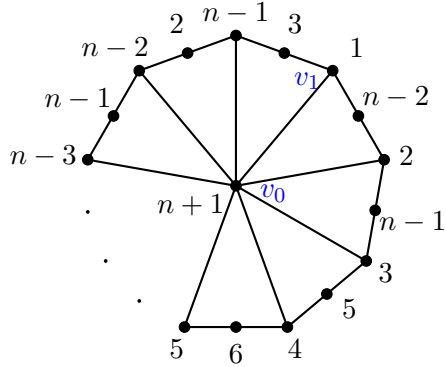
\begin{figure}
     \begin{tikzpicture}
     \draw[fill=black] (0,0)  circle (2pt);
     \draw[fill=black] (0,2)  circle (2pt);
     \draw[fill=black] (1.2856,1.532)  circle (2pt);
     \draw[fill=black] (1.97,0.347)  circle (2pt);
     \draw[fill=black] (1.732,-1)  circle (2pt);
     \draw[fill=black] (0.684,-1.879)  circle (2pt);
     \draw[fill=black] (-0.684,-1.879)  circle (2pt);
     \draw[fill=black] (-1.732,-1)  circle (0.5pt);
     \draw[fill=black] (-1.2856,-1.532)  circle (0.5pt);
     \draw[fill=black] (-1.97,-0.347)  circle (0.5pt);
     \draw[fill=black] (-1.97,0.347)  circle (2pt);
     \draw[fill=black] (-1.2856,1.532)  circle (2pt);
     \draw[fill=black] (0.6428,1.766)  circle (2pt);
     \draw[fill=black] (1.6278,0.9295)  circle (2pt);
     \draw[fill=black] (1.851,-0.3265)  circle (2pt);
     \draw[fill=black] (1.208,-1.4395)  circle (2pt);
     \draw[fill=black] (0,-1.879)  circle (2pt);
     \draw[fill=black] (-0.6428,1.766)  circle (2pt);
     \draw[fill=black] (-1.6278,0.9295)  circle (2pt);
     
     \draw[thick] (0,2)--(1.2856,1.532)--(1.2856,1.532)--(1.97,0.347)--(1.732,-1)--(0.684,-1.879)--(-0.684,-1.879);
     \draw[thick] (0,2)--(-1.2856,1.532)--(-1.97,0.347);
     \draw[thick] (0,0)--(0,2);
     \draw[thick] (0,0)--(1.2856,1.532);
     \draw[thick] (0,0)--(1.97,0.347);
     \draw[thick] (0,0)--(1.732,-1);
     \draw[thick] (0,0)--(0.684,-1.879);
     \draw[thick] (0,0)--(-0.685,-1.879);
     \draw[thick] (0,0)--(-1.2856,1.532);
     \draw[thick] (0,0)--(-1.97,0.347);

     \node[blue] at (0.5,-0.1) {$v_0$};
     \node[blue] at (0.95,1.4) {$v_1$};
     
     \node at (-0.6,-0.25) {$n+1$};
     \node at (1.607,1.839) {$1$};
     \node at (2.265,0.399) {$2$};
     \node at (1.992,-1.15) {$3$};
     \node at (0.7866,-2.1613) {$4$};
      \node at (-0.7866,-2.1613) {$5$};
     \node at (-2.56,0.3994) {$n-3$};
     \node at (-1.607,1.838) {$n-2$};
     \node at (0,2.3) {$n-1$};
     
     \node at (2.1278,1.1795) {$n-2$};
     \node at (2.351,-0.4865) {$n-1$};
     \node at (1.508,-1.6395) {$5$};
     \node at (0,-2.229) {$6$};
     \node at (-2.1,1.15) {$n-1$};
     \node at (-0.7866,2.1613) {$2$};
     \node at (0.7866,2.1613) {$3$};
     \end{tikzpicture}
     \caption{Vertices $v_0$ and $v_1$ (with labels $n$ and $1$, respectively) are named in the figure. Vertex $v_2$ has label $n-2$ and is adjacent to $v_1$, and $v_3,v_4,\ldots,v_{2n-2}$ continue clockwise. This construction shows $\chi_{td}(G_n) \leq n+1$ when $n$ is even and $n\geq8$.}
     \label{fig:g_even}
 \end{figure}
\end{proof}

\begin{theorem}\label{thm:helms}
For $n\geq 4$, $\chi_{td}(H_n)=\chi_{td}(W_n)$ except when $n$ is $6$ or $7$. In these cases, $\chi_{td}(H_6)=\chi_{td}(H_7)=8$.
\end{theorem}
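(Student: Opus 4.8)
Since $W_n$ is obtained from $H_n$ by deleting the $n-1$ pendant leaves, $W_n$ is a subgraph of $H_n$ and Proposition~\ref{prop:subgraph} gives $\chi_{td}(H_n)\ge\chi_{td}(W_n)$ for every $n$. The argument therefore reduces to supplying the matching upper bound $\chi_{td}(H_n)\le\chi_{td}(W_n)$ when $n\notin\{6,7\}$, and separately establishing both $\chi_{td}(H_n)\ge 8$ and $\chi_{td}(H_n)\le 8$ for $n=6,7$. The guiding observation throughout is that the leaves are pairwise non-adjacent and their pendant edges pairwise non-incident, so once a total difference labeling of $W_n$ is fixed, each leaf may be labeled independently of the others. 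By Lemma~\ref{lem:doublesequence} the only requirement on the label of the leaf at a rim vertex $u$ is that it create no double or triple at $u$.

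For the upper bound when $n\notin\{6,7\}$, I would start from the explicit wheel labeling of Theorem~\ref{thm:wheels}. A rim vertex $u$ has degree $3$ in $W_n$ (the hub and its two rim-neighbours), so the labels forbidden for its leaf are $f(u)$, $2f(u)$ and $\tfrac{f(u)}{2}$ (to avoid doubles) together with $f(w)$ and $2f(u)-f(w)$ for each of the three neighbours $w$ (to avoid triples): at most nine values. Hence whenever $k=\chi_{td}(W_n)\ge 10$ a permissible leaf label exists \emph{regardless} of the wheel labeling, which disposes of all even $n\ge 10$ and all odd $n\ge 11$. The finitely many small cases $n\in\{4,5,8,9\}$ I would settle by fixing the labeling of Theorem~\ref{thm:wheels} and checking by a short computation that strictly fewer than $k$ labels are forbidden at each rim vertex (with explicit constructions drawn for $H_4,H_5$).

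The heart of the theorem is the lower bound $\chi_{td}(H_n)\ge 8$ for $n=6,7$. Restricting a hypothetical $7$-total difference labeling to the star $K_{1,n-1}$ at the hub $v_0$ and invoking Theorem~\ref{thm:star} together with the remark following it pins down the hub: $v_0=7$ for $H_7$ (here $m=6$ is even) and $v_0\in\{1,7\}$ for $H_6$ (here $m=5$ is odd). Because the spokes are incident at the hub their labels are distinct, forcing the rim labels to be distinct; thus for $H_7$ the rim is exactly $\{1,\dots,6\}$. The decisive point is the leaf at the rim vertex labeled $3$: with hub $7$ its forbidden set already contains $\{3,6,7\}$, and each rim-neighbour $b$ adjoins $\{b,6-b\}$, which is $\{1,5\}$ if $b\in\{1,5\}$ and $\{2,4\}$ if $b\in\{2,4\}$ (the value $6$ is excluded as a neighbour, being a double of $3$). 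The only way to leave this leaf a usable label is for both rim-neighbours to lie in $\{1,5\}$ or both in $\{2,4\}$, i.e.\ to be $1,5$ or $2,4$ — but each of these makes $3$ the middle of a triple ($1,3,5$ or $2,3,4$), which is impossible in a total difference labeling. Hence the leaf cannot be labeled, a contradiction. For $H_6$ I would dispatch two extra wrinkles: when $3$ is absent from the rim (hub $7$), avoiding the doubles $\{1,2\}$ and $\{2,4\}$ forces $2$ between $5$ and $6$ and then $4$ adjacent to $1$, producing the hub-triple $7,4,1$; and when the hub is $1$, the rim must be $\{3,4,5,6,7\}$ and the same style of computation forces the rim-neighbours of $3$ to be $4$ and $7$, again exhausting $[1,7]$ for its leaf. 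Thus $k=7$ is impossible and $\chi_{td}(H_6)=\chi_{td}(H_7)\ge 8$.

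Finally, for the matching upper bound I would exhibit explicit $8$-total difference labelings of $H_6$ and $H_7$ (as figures), verifying admissibility via Remark~\ref{rem:onlyvertexlabels} by checking that the vertex labels contain no doubles or triples. I expect the main obstacle to be exactly the lower bound of the previous paragraph: isolating the leaf at the vertex labeled $3$ as the unavoidable choke point and recognizing that the wheel's own triple-freeness is precisely what annihilates every escape, while also cleanly handling the hub $=1$ and ``$3$ absent'' sub-cases for $H_6$. By contrast the independence of the leaves makes the positive direction for $n\notin\{6,7\}$ a routine counting argument.
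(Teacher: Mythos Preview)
Your plan is sound and aligns with the paper's treatment, which explicitly declines to prove the theorem in detail: the paper records only the subgraph lower bound and asserts that for $n\ge 8$ the wheel labelings of Theorem~\ref{thm:wheels} can be extended to the leaves, together with a figure for $H_7$. Your proposal is therefore strictly more complete than the paper on two fronts. First, your uniform ``at most nine forbidden values'' count gives a clean reason why the extension succeeds for all sufficiently large $n$, whereas the paper leaves this implicit. Second, the paper gives no argument whatsoever for the lower bound $\chi_{td}(H_6),\chi_{td}(H_7)\ge 8$; your leaf-at-$3$ analysis is correct and is exactly the kind of argument one needs, and your handling of the two extra $H_6$ sub-cases (hub $=1$; hub $=7$ with $3$ absent) is also correct.

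One caution on the small cases: the specific $W_5$ labeling recorded in Theorem~\ref{thm:wheels} (hub $7$, rim $1,3,2,5$) does \emph{not} extend to $H_5$, for precisely the reason you discovered in the $n=6,7$ lower bound---the leaf at the rim vertex labeled $3$ has all of $[1,7]$ forbidden. So your parenthetical ``explicit constructions drawn for $H_4,H_5$'' is not optional; you genuinely need a different $7$-labeling of $W_5$ to carry (for instance hub $7$, rim $1,5,2,6$ works and extends). Be sure your write-up does not suggest that the paper's $W_5$ labeling suffices.
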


We do not prove Theorem \ref{thm:helms} in detail. By Proposition~\ref{prop:subgraph} it is clear that $\chi_{td}(H_n)\geq\chi_{td}(W_n)$. For $n\geq 8$, a construction that shows $\chi_{td}(H_n)\leq\chi_{td}(W_n)$ can be obtained from the total difference labelings for $W_n$ described in Theorem~\ref{thm:wheels} by determining labels for the leaves of $H_n$ that avoid doubles and triples.

\begin{figure}
    \centering
    \begin{tikzpicture}[scale=1]
    \draw[fill=black] (6,0) circle (2pt);
    \draw[fill=black] (5.25,1.3) circle (2pt);
    \draw[fill=black] (4.8,2.1) circle (2pt);
    \draw[fill=black] (4.8,-2.1) circle (2pt);
    \draw[fill=black] (7.2,2.1) circle (2pt);
    \draw[fill=black] (7.2,-2.1) circle (2pt);
    \draw[fill=black] (3.5,0) circle (2pt);
    \draw[fill=black] (7.5,0) circle (2pt);
    \draw[fill=black] (8.5,0) circle (2pt);
    \draw[fill=black] (4.5,0) circle (2pt);
    \draw[fill=black] (6.75,1.3) circle (2pt);
    \draw[fill=black] (6.75,-1.3) circle (2pt);
    \draw[fill=black] (5.25,-1.3) circle (2pt);
    \draw[thick] (6,0)--(7.5,0)--(6.75,-1.3)--(5.25,-1.3)--(4.5,0)--(5.25,1.3)--(6.75,1.3)--(7.5,0)--(8.5,0);
    \draw[thick](7.2,-2.1)--(6.75,-1.3)--(6,0)--(5.25,-1.3)--(4.8,-2.1);
    \draw[thick] (3.5,0)--(4.5,0)--(6,0) -- (5.25,1.3)--(4.8,2.1);
    \draw[thick] (6,0) -- (6.75,1.3)--(7.2,2.1);
    \node at (6.35,0.2) {8};
    \node at (7.75,0.2) {1};
    \node at (4.25,0.2) {2};
    \node at (6.75,1.6) {6};
    \node at (5.25,1.6) {5};
    \node at (6.75,-1.6) {7};
    \node at (5.25,-1.6) {3};
    \node at (3.3,0) {7};
    \node at (8.8,0) {3};
    \node at (4.65,2.3) {7};
    \node at (4.65,-2.3) {5};
    \node at (7.4,2.3) {2};
    \node at (7.4,-2.3) {2};
    
    \end{tikzpicture}
    \caption{A construction that shows $\chi_{td}(H_7) \leq 7$.}
    \label{fig:h7}
\end{figure}
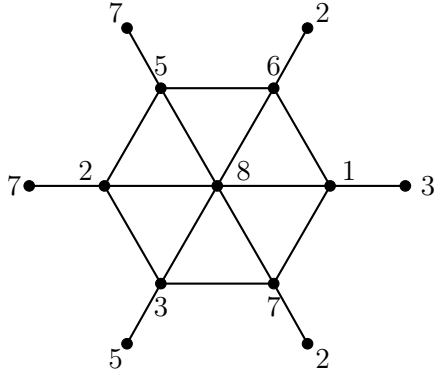

\section{Trees}\label{sec:trees}
In this section, we first determine the total difference chromatic numbers of caterpillars.  We then provide lower and upper bounds for the total difference chromatic numbers of lobsters, and end with an upper bound for the total difference chromatic numbers of general trees.

\begin{definition}
A \emph{caterpillar} is a tree with the property that removal of its vertices of degree one and their incident edges leaves a path.
\end{definition}

Consider the path formed by the removal of the degree one vertices in a given caterpillar. The degrees of the endpoints of this path in the caterpillar must be at least two (else they would not be part of the path as they would have been removed). 

\begin{definition}
A caterpillar is a tree in which all vertices are at most distance 1 from a central path.
\end{definition}

\begin{theorem}\label{thm:cat_bound}
If $G$ is a caterpillar with maximum degree $\Delta$, then $\Delta+1\leq \chi_{td}(G)\leq \Delta+3$.
\end{theorem}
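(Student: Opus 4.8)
The lower bound is the easy half. A vertex $v$ of maximum degree together with its $\Delta$ neighbours and the $\Delta$ edges joining them forms a copy of $K_{1,\Delta}$ as a subgraph of $G$, so Proposition~\ref{prop:subgraph} and Theorem~\ref{thm:star} give $\chi_{td}(G)\ge\chi_{td}(K_{1,\Delta})\ge\Delta+1$. (Equality in Theorem~\ref{thm:star} holds only when $\Delta$ is even; when $\Delta$ is odd one even gets the stronger bound $\Delta+2$, but $\Delta+1$ is all that is claimed.)

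For the upper bound I would exhibit a $(\Delta+3)$-total difference labeling, and by Remark~\ref{rem:onlyvertexlabels} it suffices to produce a proper vertex labeling $c:V(G)\to[1,\Delta+3]$ with no doubles and no triples. Write the spine as $u_1,u_2,\dots,u_s$ and let $L_i$ be the leaves attached to $u_i$; every vertex is a spine vertex or a leaf, each $\deg(u_i)\le\Delta$, and each $u_i$ has at most two neighbours on the spine. The plan is to label the spine first and then assign the leaves greedily, vertex by vertex. The guiding principle is that a spine label sitting near an end of $[1,\Delta+3]$ is \emph{leaf-friendly}: when $\ell_i$ lies within $2$ of an end there are almost no labels of $[1,\Delta+3]$ symmetric about $\ell_i$ (none when $\ell_i\in\{1,\Delta+3\}$, a single pair when $\ell_i\in\{2,\Delta+2\}$), so by Lemma~\ref{lem:doublesequence} the incident-edge labels at $u_i$ are controlled almost entirely by distinctness of the neighbour labels, the only remaining obstructions being the value $\ell_i$ itself and the one or two double values $2\ell_i$, $\ell_i/2$.

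Concretely I would label the spine with the period-four pattern $1,\ \Delta+2,\ 2,\ \Delta+3$ repeated and truncated to length $s$. For $\Delta\ge3$ one checks directly that this is a valid total difference labeling of the spine path (consecutive labels differ, no edge forms a double, and at every interior spine vertex the two incident differences are distinct) and that each of the four labels used is leaf-friendly. Having fixed the spine, I would give the leaves of $L_i$ distinct labels whose incident-edge labels are not already used at $u_i$ and which avoid the double value(s). Because leaves attached to distinct spine vertices are at distance at least three, no double or triple can involve leaves of two different spine vertices, and leaves have degree one so no triple is centred at a leaf; hence it suffices to verify the double/triple conditions locally, one spine star at a time.

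The main obstacle is this final counting step at the extremal spine vertices. At a spine vertex of degree exactly $\Delta$ carrying label $2$ or $\Delta+2$, the admissible pool of neighbour labels has size exactly $\Delta$ (for $\ell_i=2$ it is $\{3,5,6,\dots,\Delta+3\}$, with induced differences $\{1,3,4,\dots,\Delta+1\}$, after excluding the centre $2$ and the double values $1$ and $4$), and two of those labels are already spent on the spine neighbours; one must confirm that the remaining $\Delta-2$ admissible labels cover all of $|L_i|\le\Delta-2$ leaves with induced differences genuinely distinct from the two spine-edge differences and from one another. This is exactly where the constant $3$ in $\Delta+3$ is consumed, and where I expect the bookkeeping to be tightest: the cases $\ell_i\in\{1,\Delta+3\}$ are slack and immediate, while $\ell_i\in\{2,\Delta+2\}$ fit with nothing to spare. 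Finally I would treat $\Delta\le2$ separately (for $\Delta=1$, $G=K_2$ and $\chi_{td}=2=\Delta+1$; for $\Delta=2$, $G$ is a path and Theorem~\ref{thm:path} applies), since the period-four pattern requires $\Delta\ge3$ to rule out an accidental double such as $\Delta+2=4$.
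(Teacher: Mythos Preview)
Your approach is correct and follows the same strategy as the paper: label the central path periodically with values near the ends of $[1,\Delta+3]$, then extend to the leaves one star at a time. The only real difference is the spine pattern: the paper uses the period-$3$ sequence $1,\Delta+3,\Delta+2$ instead of your period-$4$ sequence $1,\Delta+2,2,\Delta+3$. By keeping $2$ off the spine, the paper sidesteps the tight $\ell_i=2$ case you flag, and the leaf pools become simply $[3,\Delta+1]$, $[2,\Delta+1]\setminus\{\tfrac{\Delta+3}{2}\}$, and $[2,\Delta]\setminus\{\tfrac{\Delta+2}{2}\}$ for spine labels $1$, $\Delta+3$, $\Delta+2$ respectively---each already of size at least $\Delta-2$ with no further pair constraints to manage. (One small slip in your base cases: $\chi_{td}(K_2)=3$, not $2$, since vertex labels $1,2$ force edge label $1$; this still lies in $[\Delta+1,\Delta+3]=[2,4]$, so the theorem is unaffected.)
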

\begin{proof}

Let $G$ be a caterpillar with maximum degree $\Delta$. Notice that $K_{1,\Delta}$ is a subgraph of $G$ so Proposition~\ref{prop:subgraph} and Theorem~\ref{thm:star} imply $\chi_{td}(G)\geq \chi_{td}(K_{1,\Delta}) \geq \Delta+1$. If $\Delta$ is $1$ or $2$, then note that $G$ is a path and we can refer to Theorem~\ref{thm:path} to determine $\chi_{td}(G)$. Therefore, we assume $\Delta\geq 3$ throughout the proof.

If there are multiple options for the choice of central path in $G$, choose one arbitrarily and call it $P$. We denote the vertices on $P$ by $v_1,v_2,\ldots,v_n$ consecutively, so that $deg(v_1)=deg(v_n)=1$ and $deg(v_i)>1$ for $i\neq 1, n$. 
We label $v_i$ on $P$ with $1$ if $i\equiv1\pmod{3}$, with $\Delta+3$ if $i\equiv2\pmod{3}$, and with $\Delta+2$ if $i\equiv0\pmod{3}$. We now consider labelings of the neighbors of an arbitrary vertex on $P$ based upon its label.

Suppose the vertex $v_i$ on $P$ gets label 1 so that $v_{i-1}$ and $v_{i+1}$ (if they exist) have labels $\Delta+2$ and $\Delta+3$, respectively. Then, since the subgraph induced by $v_i$ and its neighbors is a star $K_{1,d}$ for some $d\leq \Delta$, the remaining vertices adjacent to $v_i$ can be labeled with distinct integers from $[3,\Delta+1]$ without creating a double or triple.

Similarly, if $v_i$ gets label $\Delta+3$, then the neighbors of $v_i$ not on $P$ (of which there are at most $\Delta-2$) can be labeled arbitrarily with distinct integers from $[2,\Delta+1]$, excluding $\frac{\Delta+3}{2}$ if $\Delta$ is odd. Likewise, if $v_i$ gets label $\Delta+2$ we can label its neighbors not on $P$ arbitrarily with distinct integers from $[2,\Delta]$, excluding $\frac{\Delta+2}{2}$ if $\Delta$ is even. (Note that we avoid a $(\Delta+3,\Delta+2,\Delta+1)$-triple by disallowing the use of $\Delta+1$ as a label.)

Hence, by looking at all possible constructions we have proven that $\Delta+1\leq\chi_{td}(G)\leq\Delta+3$.
\end{proof}

In fact, we can classify caterpillars according to their total difference chromatic numbers, though the proofs are omitted here.

\begin{theorem}\label{thm:cat_delta_1}
If $G$ is a caterpillar, then $\chi_{td}(G) = \Delta+1$ if and only if 
\begin{enumerate}
    \item $\Delta$ is even,
    \item the distance between vertices of degree $\Delta$ is at least 3,
    \item no three consecutive vertices have degrees at least $\Delta-1$, and
    \item there are no five consecutive vertices with first and last having degree $\Delta$ and second and fourth having degree $\Delta-1$.
\end{enumerate}
\end{theorem}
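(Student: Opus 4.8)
The plan is to prove both directions of the biconditional by analyzing when a caterpillar with maximum degree $\Delta$ admits a $(\Delta+1)$-total difference labeling, using the machinery of doubles and triples from Lemma~\ref{lem:doublesequence}. Since $\chi_{td}(G)\geq\Delta+1$ always holds by Theorem~\ref{thm:cat_bound}, achieving $\chi_{td}(G)=\Delta+1$ means we have exactly $\Delta+1$ labels and essentially no slack: any vertex of degree $\Delta$ (with its star neighborhood $K_{1,\Delta}$) must, by Lemma~\ref{lem:star-realize} with $r=1$, receive label $\Delta+1$ (the even case) since only label $m+1$ is available when $r=1$ and $m$ is even, and this forces $\Delta$ even, establishing necessity of condition (1). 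First I would make this observation precise: with $r=1$ the star result shows $v_0$ can only be labeled $\Delta+1$, and this is possible only when $\Delta$ is even, which simultaneously forces (1) and pins down the label of every maximum-degree vertex.

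Next I would establish the necessity of conditions (2), (3), and (4) by showing that each forbidden configuration forces a double or triple when only $\Delta+1$ labels are available. For condition (2): two vertices of degree $\Delta$ at distance $\leq 2$ both demand label $\Delta+1$; if adjacent this is an immediate conflict, and if at distance $2$ their common neighbor together with them forms a $(\Delta+1,\ell,\Delta+1)$-triple, so the distance must be at least $3$. For conditions (3) and (4), I would carefully count available labels along the central path: a vertex of degree $\Delta-1$ forces its label into a very restricted set (again via Lemma~\ref{lem:star-realize}, now effectively with $r=2$), and packing three consecutive high-degree vertices, or the specific $\Delta,\Delta-1,*,\Delta-1,\Delta$ pattern of length five, exhausts the label budget and forces a repeated edge label (triple) between consecutive path vertices. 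The heart of the argument is a local counting lemma: at a degree-$\Delta$ or degree-$(\Delta-1)$ vertex, the label choices for the vertex and its leaves consume nearly all of $[1,\Delta+1]$, leaving too little freedom for a neighbor that is also high-degree.

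For sufficiency I would construct an explicit $(\Delta+1)$-total difference labeling assuming (1)--(4) hold. The natural approach is to label the central path first, assigning label $\Delta+1$ to every degree-$\Delta$ vertex and choosing labels for the remaining path vertices from a palette that avoids doubles and triples along the path; conditions (2)--(4) are exactly what guarantee that such a consistent path labeling exists, i.e. that no two forced $\Delta+1$ labels collide and that the degree-$(\Delta-1)$ constraints can be simultaneously satisfied. Having fixed the path labels, I would then label the off-path leaves of each internal vertex greedily, invoking the counting from Lemma~\ref{lem:star-realize} to certify that enough admissible labels remain in $[1,\Delta+1]$ for the leaves of each vertex. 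The main obstacle I anticipate is the sufficiency direction's bookkeeping: one must simultaneously respect the forced labels on high-degree vertices, the triple-avoidance across each length-two subpath, and the leaf-labeling feasibility at every internal vertex, and verifying that conditions (1)--(4) are precisely sufficient (not merely necessary) requires a delicate case analysis on the degree sequence of consecutive path vertices, checking each of the patterns that (2)--(4) rule out is the only way feasibility can fail.
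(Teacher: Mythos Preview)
The paper explicitly omits the proof of this theorem: immediately after stating Theorems~\ref{thm:cat_delta_1} and~\ref{thm:cat_delta_3} it says ``the proofs are omitted here.'' So there is no paper proof to compare your proposal against.

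Your outline for the necessity direction is sound. The observation that a degree-$\Delta$ vertex must carry label $\Delta+1$ (via Lemma~\ref{lem:star-realize} with $r=1$, forcing $\Delta$ even) is exactly right and immediately yields conditions~(1) and~(2). One technical caution: when you invoke Lemma~\ref{lem:star-realize} ``effectively with $r=2$'' for a degree-$(\Delta-1)$ vertex, note that the lemma as stated concerns labelings whose maximum label is \emph{exactly} $m+r$, whereas the star around a degree-$(\Delta-1)$ path vertex need not use the label $\Delta+1$ at all. You will need to union over the cases $r=1,2$ (max label $\Delta$ or $\Delta+1$ on that local star) to conclude that such a vertex is forced into $\{1,\Delta+1\}$; this is routine but should be made explicit.

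The sufficiency direction is where the real content lies, and your sketch is thin there. Saying ``conditions (2)--(4) are exactly what guarantee that such a consistent path labeling exists'' is the claim, not the argument. You will need an actual labeling scheme for the central path---presumably assigning $\Delta+1$ to each degree-$\Delta$ vertex, $1$ to certain degree-$(\Delta-1)$ vertices, and filling in the rest---and then a verification that (i) no double or triple arises along the path, and (ii) at every internal vertex enough labels remain for its leaves. The case analysis you anticipate is genuine: you must show that every degree pattern \emph{not} excluded by (2)--(4) admits a local extension, and this is where the four conditions have to be seen to be not just necessary but sharp. Absent the paper's own argument, your plan is a reasonable one, but be prepared for the construction and its verification to be the bulk of the work.
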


\begin{theorem}\label{thm:cat_delta_3}
If $G$ is a caterpillar, $\chi_{td}(G) = \Delta+3$ if and only if $\Delta$ is odd and there are at least 3 vertices with degree $\Delta$ in a row.
\end{theorem}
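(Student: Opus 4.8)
The plan is to prove both implications using the bound $\chi_{td}(G)\le\Delta+3$ from Theorem~\ref{thm:cat_bound} together with the realizability description in Lemma~\ref{lem:star-realize}. I fix a central path $P=v_1,\dots,v_n$ as in Theorem~\ref{thm:cat_bound}; since $\Delta\ge 3$, every vertex of degree $\Delta$ is an interior vertex of $P$ with exactly two neighbors on $P$, and because $G$ is a tree the subgraph induced by such a vertex together with all of its neighbors is a copy of $K_{1,\Delta}$. The observation I would isolate first is a forcing statement: in any $(\Delta+2)$-total difference labeling $f$ of $G$, the restriction of $f$ to the star around a degree-$\Delta$ vertex $v$ is a total difference labeling of $K_{1,\Delta}$ with labels in $[1,\Delta+2]$, and it must actually use the label $\Delta+2$, for otherwise it would be a $(\Delta+1)$-total difference labeling of $K_{1,\Delta}$, which is impossible when $\Delta$ is odd since $\chi_{td}(K_{1,\Delta})=\Delta+2$ by Theorem~\ref{thm:star}. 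Hence Lemma~\ref{lem:star-realize} applies with $m=\Delta$ and $r=2$, and since $\Delta$ is odd with $\Delta\ge 3$ (so $r=2\neq\frac{\Delta+3}{2}$) it forces $f(v)\in\{1,\Delta+2\}$.

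For the forward implication, assume $\Delta$ is odd and that $v_{i-1},v_i,v_{i+1}$ are three consecutive vertices of degree $\Delta$. By the forcing statement each of them receives a label in the two-element set $\{1,\Delta+2\}$. Since adjacent vertices receive distinct labels, the labels along $v_{i-1},v_i,v_{i+1}$ must alternate, forcing $f(v_{i-1})=f(v_{i+1})$; but then $|f(v_{i-1})-f(v_i)|=|f(v_{i+1})-f(v_i)|$, so $v_{i-1},v_i,v_{i+1}$ form a triple, contradicting Lemma~\ref{lem:doublesequence}. Thus no $(\Delta+2)$-total difference labeling exists, and combined with $\chi_{td}(G)\le\Delta+3$ this yields $\chi_{td}(G)=\Delta+3$.

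For the reverse implication I would argue the contrapositive: if $\Delta$ is even, or if no three consecutive path vertices all have degree $\Delta$, then I construct an explicit $(\Delta+2)$-total difference labeling, giving $\chi_{td}(G)\le\Delta+2$. Following Remark~\ref{rem:onlyvertexlabels} I specify only vertex labels and check for doubles and triples. When $\Delta$ is even I label $P$ periodically using the three values $1,\Delta+1,\Delta+2$ so that every three consecutive path vertices receive all three values; the pairwise differences $\Delta,\Delta+1,1$ are distinct, which rules out triples along $P$, and these labels create no double among themselves. When $\Delta$ is odd with every maximal run of degree-$\Delta$ vertices of length at most two, I label each degree-$\Delta$ vertex with $1$ or $\Delta+2$, alternating within a run, and use that each run is flanked by vertices of degree at most $\Delta-1$; I give these flanking ``buffer'' vertices intermediate labels chosen so that the two path-neighbors of every center have distinct labels and avoid the sum condition $f(v_{i-1})+f(v_{i+1})=2f(v_i)$ (for the single-buffer case between two runs this only requires avoiding the one value $\frac{\Delta+3}{2}$). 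In both cases it remains to label the leaves at each path vertex $v_i$: with center label $\ell$, the leaves must receive labels whose differences from $\ell$ are pairwise distinct, distinct from the differences of the two path-neighbors, and avoiding the at most two double-partners $2\ell$ and $\tfrac{\ell}{2}$.

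The main obstacle is this final leaf-labeling step, which is where the degree bound enters. For a degree-$d$ center labeled $\ell$ the number of available distinct difference values is $\max(\ell-1,\Delta+2-\ell)$, and one checks that for every label actually assigned to a center (namely $\ell\in\{1,\Delta+1,\Delta+2\}$ on degree-$\Delta$ vertices, and suitably chosen $\ell$ on buffer and lower-degree vertices) this quantity, after deleting the at most two differences used by the path-neighbors and the at most two forbidden by doubles, is still at least $d-2$, the number of leaves to place; since $\ell=1$ and $\ell=\Delta+2$ attain the extreme value $\Delta+1$ and $\ell=\Delta+1$ attains $\Delta$, the inequality holds for all $d\le\Delta$. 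Verifying this count uniformly, and handling the boundary cases at the ends of $P$, at the junctions between runs and buffer vertices, and the parity distinctions between even and odd $\Delta$, is the technical heart of the argument; the forcing statement and the triple obstruction make the two directions meet exactly at the threshold $\Delta+3$.
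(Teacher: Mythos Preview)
The paper explicitly omits the proof of Theorem~\ref{thm:cat_delta_3}, so there is no argument to compare against; I can only assess your proposal on its own.

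Your forward implication is clean and essentially correct. One small point: Lemma~\ref{lem:star-realize} is stated only as a sufficiency (``$v_0$ \emph{can} have any label in\ldots''), whereas you use the necessity direction (the center \emph{must} lie in $\{1,\Delta+2\}$). That direction is proved in the body of the lemma's proof, so you should cite the proof rather than the lemma statement. With that caveat, the forcing argument and the pigeonhole on $\{1,\Delta+2\}$ over three consecutive degree-$\Delta$ vertices is exactly right.

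The reverse implication is where the proposal has a real gap. Your even-$\Delta$ construction (periodic $1,\Delta+1,\Delta+2$ on the spine) checks out: one verifies that each spine label leaves at least $\Delta-2$ admissible leaf labels after removing the two path differences and any doubles. For odd $\Delta$, however, your plan ``label the degree-$\Delta$ vertices from $\{1,\Delta+2\}$ and choose intermediate labels on buffers'' is not yet a construction. If you try to keep the spine in a three-label alphabet $\{1,\Delta+1,\Delta+2\}$, the no-triple condition forces a rigid period-$3$ sequence, and then avoiding $\Delta+1$ on every degree-$\Delta$ vertex is a congruence condition that can fail even with all runs of length at most two (e.g.\ runs at spine positions $\{2,3\}$ and $\{6,7\}$ are incompatible). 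So you genuinely need to leave the three-label alphabet on buffer vertices, and once you do, the leaf-count you quote, $\max(\ell-1,\Delta+2-\ell)$ minus ``at most two path differences and at most two doubles,'' is too crude: for a buffer of degree $\Delta-1$ with label near $(\Delta+2)/2$ this count can drop well below $\Delta-3$. You therefore need to specify which buffer labels you actually use (they must be near $1$ or near $\Delta+2$, as in Lemma~\ref{lem:star-realize} with $m=d$ and $r=\Delta+2-d$), and then verify case by case that the spine constraints (adjacent distinct, no double, no triple) can be met simultaneously with those restricted buffer labels, including when two runs are separated by a single buffer. That bookkeeping is exactly the ``technical heart'' you flag, and it is not yet done; as written, the contrapositive direction is a plausible outline rather than a proof.
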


Using Theorems~\ref{thm:cat_bound}, \ref{thm:cat_delta_1}, and \ref{thm:cat_delta_3} we can determine the total difference chromatic number of any caterpillar. In particular, any caterpillar not satisfying the conditions set forth in Theorem~\ref{thm:cat_delta_1} or \ref{thm:cat_delta_3} must have total difference chromatic number $\Delta+2.$

We now turn our attention to the trees known as lobsters, a natural extension of caterpillars.

\begin{definition}
A \emph{lobster} is a tree in which all vertices are at most distance 2 from a central path.
\end{definition}

In other words, removing the leaves of a lobster forms a caterpillar. 

We call the vertices on the central path the \emph{primary vertices}. Vertices adjacent to primary vertices which are not themselves primary are called \emph{secondary vertices}, and leaves adjacent to secondary vertices are called \emph{tertiary vertices}. We define $\Delta_1$ to be the maximum degree of the primary vertices and $\Delta_2$ to be the maximum degree of the secondary level vertices. A \emph{maximal lobster} is one in which every primary vertex with degree at least $2$ has degree $\Delta_1$ and every secondary vertex has degree $\Delta_2$. We will prove an upper bound on the total difference chromatic number of maximal lobsters and hence, by Proposition~\ref{prop:subgraph}, all lobsters.

The first step in determining our upper bound for the total difference chromatic number of an arbitrary maximal lobster is to label the central path, $P$ in the same way we labeled it for a caterpillar. Namely, we label $v_i$ on $P$ with $1$ if $i\equiv1\pmod{3}$, with $\Delta_1+3$ if $i\equiv2\pmod{3}$, and with $\Delta_1+2$ if $i\equiv0\pmod{3}$. Let $R=\{1,\Delta_1+2,\Delta_1+3\}$
By Theorem~\ref{thm:cat_bound} we know that all of the secondary vertices can have labels from $[1,\Delta_1+3]$. To avoid triples involving two primary and one secondary vertex, we restrict our possible secondary vertex labels to the set $S=[2,\Delta_1+1]$.

For each possible pair $(r,s)\in R\times S$ we consider the set of tertiary vertices adjacent to a secondary vertex with label $s$, which itself is adjacent to a primary vertex with label $r$. We greedily label this set of vertices and record the maximum label used. For given values of $\Delta_1$ and $\Delta_2$, we call this value $m_{\Delta_1,\Delta_2}(r,s)$. (See Table~\ref{tab:lobster} for an example with $\Delta_1=8$ and $\Delta_2=7$.) Notice that $m_{\Delta_1,\Delta_2+1}(r,s)\geq m_{\Delta_1,\Delta_2}(r,s)$+1.

\begin{table}
\centering
\begin{tabular}{|c|*{8}{c|}}\hline
\diagbox[width=2em]{r}{s}
&\makebox[2em]{2}&\makebox[2em]{3}&\makebox[2em]{4}
&\makebox[2em]{5}&\makebox[2em]{6}&\makebox[2em]{7}&\makebox[2em]{8}&\makebox[2em]{9}\\\hline
1 &&11&12&13&14&15&12&7\\\hline
10 &9&11&12&&14&15&12&\\\hline
11 &9&10&12&13&14&15&12&6\\\hline
\end{tabular}
\vskip0.5em
    \caption{The values of $m_{8,7}(r,s)$ for all $(r,s)\in R\times S$. The table entries for invalid $(r,s)$ pairs are left blank.}
    \label{tab:lobster}
\end{table}

\begin{definition}
We call the least value of $\Delta_2$ at which all subsequent unit increases of $\Delta_2$ cause unit increases of $m_{\Delta_1,\Delta_2}(r,s)$ the \emph{stabilization point} of $(r,s)$. 
\end{definition}

\begin{lemma}\label{lem:lob_stabilize}
Let $G$ be a lobster and let $r\in R=\{1,\Delta_1+2,\Delta_1+3\}$ and $s\in S=[2,\Delta_1+1]$. 
\begin{enumerate}
    \item[(a)] If $2\leq s<\frac{\Delta_1+4}{2}$, then $m_{\Delta_1,\Delta_1+3-s}(r,s)=\Delta_1+4$, and the stabilization point of $(r,s)$ is at most $\Delta_1+3-s$.
    \item[(b)] If $\frac{\Delta_1+4}{2}\leq s\leq
    \Delta_1+1$, then $m_{\Delta_1,s}(r,s)=2s+1$, and the stabilization point of $(r,s)$ is at most $s$.
\end{enumerate}
\end{lemma}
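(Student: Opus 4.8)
The plan is to analyze the greedy procedure that defines $m_{\Delta_1,\Delta_2}(r,s)$ directly. Fix a secondary vertex $u$ with label $s$ adjacent to a primary vertex $w$ with label $r$. In a maximal lobster $u$ has exactly $\Delta_2-1$ tertiary neighbours, since it has a unique primary neighbour and the tree structure forbids a second. By Lemma~\ref{lem:doublesequence}, a label $\ell$ given to a tertiary leaf is admissible exactly when $\ell$ avoids the forbidden set $F=\{s,2s,r,2s-r\}\cup\{s/2 : s\text{ even}\}$ and, for every previously used leaf label $\ell'$, satisfies $\ell\neq\ell'$ and $\ell+\ell'\neq 2s$ (so that the incident edge labels $|s-\ell|$ remain pairwise distinct and differ from $|s-r|$). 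Thus the leaf-usable labels fall into symmetric pairs $\{s-k,\,s+k\}$ about $s$, and the greedy rule always selects the smaller representative.

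First I would pin down the shape of the greedy output. Scanning labels upward, the values $1,\dots,s-1$ are all used except those lying in $F$; $s$ is skipped; in the band $[s+1,2s-1]$ every value is blocked because its partner below $s$ was already used, the sole exception being $3s/2$, which becomes available precisely when $s$ is even (its partner $s/2$ was then forbidden rather than used); $2s$ is skipped; and every value $\geq 2s+1$ is usable except $r$ (when $r\geq 2s+1$). Consequently there is a wide gap between the last label below $2s$ and the first ``upper'' label $2s+1$, so once the greedy draws from $[2s+1,\infty)$ each new leaf raises the maximum by exactly $1$, apart from a single $+2$ step as it passes $r$. This gives both the monotonicity $m_{\Delta_1,\Delta_2+1}(r,s)\geq m_{\Delta_1,\Delta_2}(r,s)+1$ already recorded in the text and the principle that stabilization is attained as soon as the running maximum exceeds $r$.

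For part (b), where $\frac{\Delta_1+4}{2}\leq s\leq\Delta_1+1$, I would verify that exactly one symmetric pair is entirely forbidden: $\{1,2s-1\}$ when $r=1$, or $\{r,2s-r\}$ when $r\in\{\Delta_1+2,\Delta_1+3\}$, the latter because $s\geq\frac{\Delta_1+4}{2}$ forces $r\leq 2s-1$. Hence the loss of $s/2$ (when $s$ is even) is offset by the gained label $3s/2$, so the usable labels below $2s$ number $s-2$. Taking $\Delta_2=s$, i.e.\ $s-1$ leaves, greedy uses all $s-2$ of these plus exactly one upper label $2s+1$, giving $m_{\Delta_1,s}(r,s)=2s+1$; since $2s+1>r$ the maximum has already passed $r$, so the stabilization point is at most $s$. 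For part (a), where $2\leq s<\frac{\Delta_1+4}{2}$, the analogous bookkeeping has two sub-cases that conspire to the same answer: when a pair is fully forbidden the below-$2s$ count is $s-2$ but $r$ lies below $2s$ (no upper skip), while when no pair is forbidden the count is $s-1$ but $r\geq 2s+1$ is skipped in the upper band; in either case, taking $\Delta_2=\Delta_1+3-s$ forces greedy into the upper band and the skipped values among $\{2s,r\}\subseteq[1,\Delta_1+3]$ push the maximum up to exactly $\Delta_1+4$. Since $\Delta_1+4>r$, the stabilization point is at most $\Delta_1+3-s$.

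I expect the main obstacle to be the uniform bookkeeping across the parity of $s$ and the three choices of $r$: one must check that $s/2$ and its recovered partner $3s/2$ always cancel, that at most one symmetric pair is fully lost, and---most delicately---that in case (a) the two skipped labels $2s$ and $r$ both fall strictly inside the range greedy traverses, so the maximum lands on $\Delta_1+4$ rather than $\Delta_1+3$, and that greedy always actually enters the upper band at $\Delta_2=\Delta_1+3-s$. The degenerate coincidences, for instance $r=2s$ (forcing a $(2s,s)$-double) or the small-$s$ collision $3s/2=2s-r$ at $(r,s)=(1,2)$, are exactly the invalid $(r,s)$ pairs, and must be excluded by invoking validity of $(r,s)$ rather than treated as genuine cases.
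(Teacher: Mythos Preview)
Your approach is essentially the same as the paper's: both arguments pair labels symmetrically as $\{i,2s-i\}$ about $s$, observe that at most one label per pair is usable, and then count how many leaves spill into the band $[2s+1,\infty)$ to reach the stated maximum. The paper's proof is considerably terser---it does not explicitly separate the $r<2s$ and $r\geq 2s+1$ subcases in part~(a), nor does it spell out the $s/2\leftrightarrow 3s/2$ parity compensation---so your more careful bookkeeping is a genuine improvement in rigor, but not a different method.
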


\begin{proof}
We first prove part (b). Suppose that $\frac{\Delta_1+4}{2}\leq s\leq\Delta_1+1$, and consider a secondary vertex, $v$, with label $s$ and degree $s$ adjacent to a primary vertex with label $r\in\{1,\Delta_1+2,\Delta_1+3\}$. We can label $s-1$ of the $s$ vertices adjacent to $v$ by using exactly one element from each set $\{i,2s-i\}$ for $1\leq i\leq 2s-1$. Notice that the primary vertex adjacent to $v$ has a label from one of these sets, since  $1$, $\Delta_1+2$, and $\Delta_1+3$ are all less than $2s$. The remaining tertiary vertex adjacent to $v$ cannot be labeled with any unused positive integer less than $2s$ since that would create a $(2s-i,s,i)$-triple, and labeling it with $2s$ would create a double. Hence, the least possible label for this one remaining vertex is $2s+1$, so $m_{\Delta_1,s}(r,s)=2s+1$. Notice that tertiary vertices adjacent to $v$ can be labeled with any integer $2s+k$ for $k>0$: these labels are larger than $2s$, and hence cannot form a double, and larger than $2s-1$ and hence cannot form a triple involving $s$. Therefore, the stabilization point of $(r,s)$ is at most $s$.

We prove part (a) similarly to the part (b). Again consider a secondary vertex $v$ with label $s$ and degree $\Delta_1+3-s$, for some $s$ with $2\leq s<\frac{\Delta_1+4}{2}$. We can label $s-1$ of the vertices adjacent to $v$ as before: by using exactly one element from each set $\{i,2s-i\}$ for $1\leq i\leq s-1$. We still must label $\Delta_1+3-s-(s-1)=\Delta_1+4-2s$ vertices adjacent to $v$, but this can be done using all integers in $[2s+1,\Delta_1+4]$ (if $r\neq 1$ then $r$ is part of this set). Observe that $\Delta_2=\Delta_1+3-s$ is greater than or equal to the stabilization point of $(r,s)$ since if $\Delta_2>\Delta_1+3-s$ we can label additional tertiary vertices with integers greater than $\max\{2s,r\}$.
\end{proof}

\begin{corollary}\label{cor:lob_less}
Assume $\Delta_2\geq\Delta_1+1$. Then
\begin{enumerate}
    \item[(a)] $m_{\Delta_1,\Delta_2}(r,s)\geq m_{\Delta_1,\Delta_2}(r,s')$ if $s\geq s'$, and
    \item[(b)] $m_{\Delta_1,\Delta_2}(r,s)=m_{\Delta_1,\Delta_2}(r',s)$ for any $r,r'\in R$,
\end{enumerate} when these values exist.
\end{corollary}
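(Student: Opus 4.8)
The plan is to reduce both parts of the corollary to a single explicit formula for $m_{\Delta_1,\Delta_2}(r,s)$ that holds whenever $\Delta_2\geq\Delta_1+1$. Specifically, I would show that under this hypothesis,
\[
m_{\Delta_1,\Delta_2}(r,s)=\Delta_2+s+1
\]
for every valid pair $(r,s)\in R\times S$. Granting this, part (b) is immediate, since the right-hand side does not depend on $r$, and part (a) is immediate, since $\Delta_2+s+1$ is nondecreasing in $s$ for fixed $\Delta_2$. Thus the entire content of the corollary is packaged into establishing this one formula, and the ``when these values exist'' caveat is handled simply by restricting attention to valid pairs throughout.

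To prove the formula I would invoke Lemma~\ref{lem:lob_stabilize}, splitting on the two regimes $2\leq s<\frac{\Delta_1+4}{2}$ and $\frac{\Delta_1+4}{2}\leq s\leq\Delta_1+1$. The first step is to verify that in each regime the hypothesis $\Delta_2\geq\Delta_1+1$ forces $\Delta_2$ to lie at or beyond the stabilization point of $(r,s)$. In the first regime the inequality $s\geq 2$ gives $\Delta_1+3-s\leq\Delta_1+1\leq\Delta_2$, and Lemma~\ref{lem:lob_stabilize}(a) bounds the stabilization point by $\Delta_1+3-s$; in the second regime the inequality $s\leq\Delta_1+1$ gives the stabilization point (at most $s$ by Lemma~\ref{lem:lob_stabilize}(b)) no larger than $\Delta_1+1\leq\Delta_2$. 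Once $\Delta_2$ is at least the stabilization point, the definition of that point together with the observation that $m_{\Delta_1,\Delta_2+1}(r,s)\geq m_{\Delta_1,\Delta_2}(r,s)+1$ forces every unit increase of $\Delta_2$ to raise $m$ by \emph{exactly} one.

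The second step is to fix the additive constant by anchoring at the stabilization value the lemma already supplies. In the first regime I would anchor at $\Delta_2=\Delta_1+3-s$, where $m=\Delta_1+4$, and add the $\Delta_2-(\Delta_1+3-s)$ unit steps to obtain $\Delta_2+s+1$; in the second regime I would anchor at $\Delta_2=s$, where $m=2s+1$, and add the $\Delta_2-s$ unit steps to obtain $\Delta_2+s+1$ once more. The key point — and essentially the only substantive step — is the pleasant coincidence that both regimes collapse to the same linear expression $\Delta_2+s+1$; this is what makes the corollary clean. Accordingly, the main (and only mild) obstacle is the bookkeeping: confirming that the two anchor values of $\Delta_2$ are themselves at most $\Delta_1+1$, so that the monotone unit-step regime genuinely covers every $\Delta_2\geq\Delta_1+1$ and the formula is not merely asymptotic but exact on the whole range of interest.
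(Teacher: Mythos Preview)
Your proposal is correct and follows essentially the same approach as the paper: invoke Lemma~\ref{lem:lob_stabilize} to compute $m_{\Delta_1,\Delta_2}(r,s)$ exactly once $\Delta_2\geq\Delta_1+1$, and then read off parts (a) and (b). The paper leaves the computation at ``straightforward to check,'' whereas you carry it through to the closed form $m_{\Delta_1,\Delta_2}(r,s)=\Delta_2+s+1$, which makes the monotonicity in $s$ and the independence of $r$ transparent.
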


\begin{proof}
By Lemma~\ref{lem:lob_stabilize}, we know that $\Delta_2=\Delta_1+1$ is greater than or equal to the stabilization points of all $(r,s)\in R\times S$, and hence we can compute each $m_{\Delta_1,\Delta_2}(r,s)$ exactly when $\Delta_2\geq\Delta_1$. It is then straightforward to check that both parts of the claim hold.
\end{proof}

\begin{theorem}
For any lobster $G$ which is not a path or caterpillar,\\ $\Delta+1\leq\chi_{td}(G)\leq \Delta_1+\Delta_2+1$.
\end{theorem}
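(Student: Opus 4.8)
The plan is to establish the two inequalities separately. The lower bound $\Delta+1\leq\chi_{td}(G)$ is immediate: since $G$ is not a path, it contains a vertex of maximum degree $\Delta$, so $K_{1,\Delta}$ is a subgraph of $G$, and Proposition~\ref{prop:subgraph} together with Theorem~\ref{thm:star} gives $\chi_{td}(G)\geq\chi_{td}(K_{1,\Delta})\geq\Delta+1$. The substance of the theorem is the upper bound $\chi_{td}(G)\leq\Delta_1+\Delta_2+1$. By Proposition~\ref{prop:subgraph} it suffices to exhibit a $(\Delta_1+\Delta_2+1)$-total difference labeling of the \emph{maximal} lobster, since every lobster is a subgraph of a maximal one with the same parameters. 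By Remark~\ref{rem:onlyvertexlabels}, it is enough to produce a proper vertex labeling using labels in $[1,\Delta_1+\Delta_2+1]$ that contains no doubles or triples.

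First I would label the central path $P$ exactly as set up in the discussion preceding Lemma~\ref{lem:lob_stabilize}: assign $v_i$ the label $1$, $\Delta_1+3$, or $\Delta_1+2$ according to $i\bmod 3$, so that consecutive primary labels avoid doubles and triples among themselves (this is the caterpillar labeling from Theorem~\ref{thm:cat_bound}). Next I would label the secondary vertices using labels from $S=[2,\Delta_1+1]$; as noted in the text, restricting to $S$ automatically avoids any triple formed by two primary vertices and one secondary vertex, and since each primary vertex has degree at most $\Delta_1$ and at most two of its neighbors lie on $P$, there are enough labels in $S$ to assign distinct values to the secondary neighbors of each primary vertex while also dodging the double and the $(\Delta_1+3,\Delta_1+2,\Delta_1+1)$-triple, exactly as in the caterpillar argument. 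The key point is that every secondary vertex receives some label $s\in S$ while sitting below a primary label $r\in R$.

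Finally I would label the tertiary vertices. For a secondary vertex with label $s$ adjacent to a primary vertex with label $r$, its tertiary neighbors must be labeled greedily, and the maximum label needed is exactly $m_{\Delta_1,\Delta_2}(r,s)$. The crux is to show this value never exceeds $\Delta_1+\Delta_2+1$. By Corollary~\ref{cor:lob_less}, when $\Delta_2\geq\Delta_1+1$ the quantity $m_{\Delta_1,\Delta_2}(r,s)$ is independent of $r$ and nondecreasing in $s$, so the worst case occurs at $s=\Delta_1+1$; Lemma~\ref{lem:lob_stabilize}(b) then gives $m_{\Delta_1,\Delta_1+1}(\cdot,\Delta_1+1)=2(\Delta_1+1)+1$, and since the stabilization point is at most $s=\Delta_1+1\leq\Delta_2$, each unit increase of $\Delta_2$ adds only one to the maximum label, yielding $m_{\Delta_1,\Delta_2}(r,\Delta_1+1)=2(\Delta_1+1)+1+(\Delta_2-(\Delta_1+1))=\Delta_1+\Delta_2+2$, which I must reconcile against the target bound. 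I expect the main obstacle to be precisely this bookkeeping: tracking how the stabilization behavior of Lemma~\ref{lem:lob_stabilize} combines across all pairs $(r,s)$ to guarantee that the global maximum label stays at or below $\Delta_1+\Delta_2+1$, handling the two regimes of $s$ (small $s$ governed by part (a), large $s$ by part (b)) and the parity caveats about which single label must be omitted, while also verifying that tertiary labels larger than $2s$ genuinely create no new doubles or triples with their secondary parent. Once the pairwise bounds from Lemma~\ref{lem:lob_stabilize} are assembled and shown to respect $\Delta_1+\Delta_2+1$, the labeling is complete and Lemma~\ref{lem:doublesequence} certifies it is a valid total difference labeling.
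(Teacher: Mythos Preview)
Your overall approach matches the paper's: reduce to a maximal lobster, label the central path with the $1,\Delta_1+3,\Delta_1+2$ pattern, label secondary vertices from $S=[2,\Delta_1+1]$, then invoke Lemma~\ref{lem:lob_stabilize} and Corollary~\ref{cor:lob_less} to bound the tertiary labels. You also correctly compute that this yields a maximum tertiary label of $\Delta_1+\Delta_2+2$, one more than the target.

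The gap is that you treat this last unit as ``bookkeeping'' to be resolved by careful tracking of regimes and parity, but in fact no amount of bookkeeping over all of $S$ will close it: if some secondary vertex actually receives the label $s=\Delta_1+1$, then Lemma~\ref{lem:lob_stabilize}(b) genuinely forces a tertiary label of $\Delta_1+\Delta_2+2$. The missing idea, which the paper supplies explicitly, is that the label $\Delta_1+1$ can be \emph{avoided entirely} on secondary vertices. Each interior primary vertex has two neighbors on $P$, so it has only $\Delta_1-2$ secondary neighbors; for each value of $r\in\{1,\Delta_1+2,\Delta_1+3\}$ one checks that there are at least $\Delta_1-2$ admissible secondary labels in $[2,\Delta_1]$ (after discarding the one value that would create a double or the $(\Delta_1+3,\Delta_1+2,\Delta_1+1)$-triple). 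With the largest secondary label now $s=\Delta_1$, Lemma~\ref{lem:lob_stabilize}(b) gives $m_{\Delta_1,\Delta_1}(r,\Delta_1)=2\Delta_1+1$, and stabilization then yields $m_{\Delta_1,\Delta_2}(r,\Delta_1)=\Delta_1+\Delta_2+1$ for all $\Delta_2$, which is exactly the bound. You already noted the relevant degree count (``at most two of its neighbors lie on $P$''), but used it only to say there are enough labels; the point is that it lets you dispense with the worst label altogether.
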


\begin{proof}
To prove that the claimed lower bound holds, we observe that a lobster contains as an induced subgraph the star $K_{1,\Delta}$ and then appeal to Proposition~\ref{prop:subgraph}.

Consider a maximal lobster in which all primary vertices (except the two of degree 1) have degree $\Delta_1$ and all secondary vertices have degree $\Delta_2$. First assume $\Delta_2=\Delta_1+1$. By Lemma~\ref{lem:lob_stabilize} and Corollary~\ref{cor:lob_less}(a), when $\Delta_2=\Delta_1+1$, the largest tertiary vertex label occurs when $s=\Delta_1+1$ and $r=1,\Delta_1+3$ and is $2\cdot(\Delta_1+1)+1=\Delta_1+\Delta_2+2$. 
By Lemma~\ref{lem:lob_stabilize}, all pairs $(r,s)$ have reached their stabilization points. Therefore, when $\Delta_2\geq\Delta_1+1$ and $k\geq 0$, $m_{\Delta_1,\Delta_2+k}(r,s)=m_{\Delta_1,\Delta_2}(r,s)+k$ and  $m_{\Delta_1,\Delta_2-k}(r,s)\leq m_{\Delta_1,\Delta_2}(r,s)-k$. Hence for any value of $\Delta_2$, the largest tertiary vertex label is at most $\Delta_1+\Delta_2+2$. 

 We now show that we can decrease this upper bound by 1. For the primary vertices with degree greater than 1, we know, by our construction, the labels of the two adjacent primary vertices. Therefore we need only $\Delta_1-2$ distinct secondary vertex labels (i.e., values of $s$), and hence, for each $r\in\{1,\Delta_1+2,\Delta_1+3\}$ we can choose the least $\Delta_1-2$ values of $s\in[2,\Delta_1+1]$ that do not create any doubles or triples.

We check all possible pairs $(r,s)\in R\times S$ for doubles and triples.  If $r=1$, $s$ cannot be $2$, or we would have a double. If $r=\Delta_1+2$, then $s$ cannot be $\Delta_1+1$ otherwise we would have a $(\Delta_1+3, \Delta_1+2, \Delta_1+1)$-triple involving two primary vertices. Finally, depending on the parity of $\Delta_1$, we must avoid either $s=\frac{\Delta_1+2}{2}$ (for $r=\Delta_1+2$) or $s=\frac{\Delta_1+3}{2}$ (for $r=\Delta_1+3$). See the empty boxes in Table~\ref{tab:lobster} for $(r,s)$ pairs that create doubles or triples in the case where $\Delta_1$ is even.

For each value of $r$, we need only the least possible $\Delta_1-2$ values of $s$ as secondary vertex labels. If $r=1$, since the only forbidden value of $s$ is 2, we can use all of the integers in $[3,\Delta_1]$. For one of the remaining two possible values of $r$ we cannot have $s=\frac{r}{2}$. For this value of $r$, we use as our secondary labels all integers in $[2,\Delta_1]$ except $\frac{r}{2}$. For the other value of $r$, we can use all integers in $[2,\Delta_1-1]$. Notice that we need not label any of the secondary vertices in our maximal lobster with $\Delta_1+1$.

Hence, when $\Delta_2=\Delta_1+1$, the largest relevant secondary vertex label is $s=\Delta_1$ and $m_{\Delta_1,\Delta_1+1}(r,\Delta_1)=2\cdot\Delta_1+2=\Delta_1+\Delta_2+1$ by Lemma~\ref{lem:lob_stabilize}. As all pairs $(r,s)$ have reached their stabilization points we know that for any value of $\Delta_2$, and for any lobster, the largest tertiary vertex label is at most $\Delta_1+\Delta_2+1$.
\end{proof}

We conclude by finding bounds for the total difference  chromatic number of an arbitrary tree in terms of its maximum degree. We first define a \emph{uniform full $\Delta$-ary tree}, denoted $T_{\Delta,h}$. For any integer $\Delta\geq 2$, $T_{\Delta,1}$ is defined to be the star $K_{1,\Delta}$. We let $v_0$ be the vertex in $T_{\Delta,1}$ with maximal degree, and each other vertex is a leaf. For each integer $h\geq 2$ we define $T_{\Delta,h}$ to be the tree obtained from $T_{\Delta,h-1}$ by appending $\Delta-1$ new leaves to each leaf of $T_{\Delta,h-1}$. Therefore, $T_{\Delta,h}$ is a rooted tree in which the distance from the root $v$ to every other vertex is at most $h$. It is maximal in the sense that every vertex whose distance from $v$ is less than $h$ has degree $\Delta$, while those at distance $h$ have degree 1. 

Notice that every tree is a subgraph of $T_{\Delta,h}$ for some choice of $\Delta$ and $h$. Therefore, if we find an upper bound for $\chi_{td}(T_{\Delta,h})$ the same bound works for an arbitrary tree $T$ with maximum degree $\Delta$ and appropriately-chosen $h$.

Before providing such a bound, we determine the total difference chromatic number for  uniform full $\Delta$-ary trees with height $2$.

\begin{lemma}\label{lem:tree2}
For a uniform full $\Delta$-ary tree with height $2$, \[\chi_{td}(T_{\Delta,2})=\left\lfloor\frac{3\Delta+3}{2}\right\rfloor.\]
\end{lemma}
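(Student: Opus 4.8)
The plan is to treat both the root $v_0$ and its $\Delta$ children as centers of stars $K_{1,\Delta}$, and to exploit that in a $k$-total difference labeling a degree-$\Delta$ vertex forces $\Delta$ distinct realizable ``differences'' at its own label. By Lemma~\ref{lem:doublesequence} I track only vertex labels. Recall the structure of $T_{\Delta,2}$: the root $v_0$ has degree $\Delta$, each of its $\Delta$ children has degree $\Delta$ (one edge to $v_0$ and $\Delta-1$ edges to leaves), and there are $\Delta(\Delta-1)$ leaves.

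First I set up the key local notion. For a label $s\in[1,k]$, call $\delta\ge 1$ a \emph{realizable difference at $s$} if some neighbor label in $[1,k]$ produces it without forcing a double; that is, if $s-\delta\ge 1$ or $s+\delta\le k$, while excluding $\delta=s$ (whose only realization $2s$ is a double) and, when $s$ is even with $3s/2>k$, excluding $\delta=s/2$. A degree-$\Delta$ vertex with label $s$ needs $\Delta$ distinct incident edge labels, hence at least $\Delta$ realizable differences; a short count shows this forces $s\le k-\Delta-1$ or $s\ge\Delta+1$. I call such labels \emph{usable}.

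For the lower bound, the root and the $\Delta$ children form $\Delta+1$ pairwise distinct degree-$\Delta$ vertices: distinctness of the child labels holds because any two children share the neighbor $v_0$, so equal labels would give a triple, exactly as in Proposition~\ref{prop:lowerbound}. Hence their labels are $\Delta+1$ distinct usable labels. Using the generous condition above, the usable labels lie in $[1,k-\Delta-1]\cup[\Delta+1,k]$, a set of size $2k-2\Delta-1$ whenever $k\le 2\Delta+1$. Requiring $2k-2\Delta-1\ge\Delta+1$ gives $k\ge\tfrac{3\Delta+2}{2}$, and since $k$ is an integer this is precisely $k\ge\lfloor\tfrac{3\Delta+3}{2}\rfloor$ (the regime $k\ge 2\Delta+2$ clears the bound trivially). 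The crucial point, and the reason the value carries an extra $+1$ over the naive $\tfrac{3\Delta}{2}$, is that the root itself must occupy a usable label, so one needs $\Delta+1$ rather than $\Delta$ of them; this single observation unifies the even and odd cases.

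For the upper bound I set $k=\lfloor\tfrac{3\Delta+3}{2}\rfloor$, label $v_0$ with $k$, and assign the children the remaining usable labels (exactly $\Delta$ of them when $\Delta$ is even, one fewer than available when $\Delta$ is odd, where the spare label lets me avoid a parity collision). I will verify that the resulting root-to-child differences are distinct and that no root-child double arises, the only candidate being a child at $k/2$, which lies in the forbidden band $[k-\Delta,\Delta]$ and so is never chosen. Finally, because the leaf sets of distinct children are independent, each child $s$ can host its $\Delta-1$ leaves: the root edge consumes one realizable difference, at least $\Delta-1$ remain, and I realize these by distinct labels in $[1,k]$, which automatically avoids triples (distinct differences) and doubles (by the exclusions built into ``realizable''). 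I expect the main obstacle to be the bookkeeping around doubles and parity in the count of realizable differences: the lower bound only needs the generous overestimate of the usable set, whereas the construction needs genuinely usable labels, so I must confirm that at $k=\lfloor\tfrac{3\Delta+3}{2}\rfloor$ the true usable set still contains the $\Delta+1$ labels I want, which is where the even/odd split and the borderline value $s=\Delta+1$ require care. Once this local count is pinned down in both directions, the two bounds meet.
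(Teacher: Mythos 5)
Your proposal is correct and takes essentially the same approach as the paper: the paper likewise restricts which labels can sit at the center of a $K_{1,\Delta}$ when the maximum label is $k$ (via its Lemma~\ref{lem:star-realize}), observes that the root and its $\Delta$ children need $\Delta+1$ pairwise distinct such labels, and closes with a construction placing the top label at the root. The only real difference is presentational---you re-derive the admissibility constraint through ``realizable differences'' instead of citing Lemma~\ref{lem:star-realize}---and your deferred checks (distinct root--child differences, no doubles, the borderline label $\Delta+1$ when $\Delta$ is odd) do all go through at $k=\left\lfloor\frac{3\Delta+3}{2}\right\rfloor$, where your generous ``usable'' set coincides with the true one.
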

\begin{proof}
The root $v_0$ of $T_{\Delta,2}$  is adjacent to $\Delta$ vertices, which we denote by $v_1,v_2,\ldots,v_\Delta$, each of which is adjacent to a further $\Delta$ vertices (including $v_0$). First assume $\Delta$ is odd. 
By Lemma~\ref{lem:star-realize}, for any $1\leq r\leq \Delta$ with $r\neq\frac{\Delta+3}{2}$, there are exactly $2r-2$ possible labels for each $v_i$ so that the maximum label on each $K_{1,\Delta}$ is exactly $\Delta+r$. 

Notice that $v_0,v_1,\ldots,v_\Delta$ all must have different labels since they are most distance 2 apart. Therefore, we must choose $r$ so that $2r-2\geq \Delta+1$. In particular we must have that $r\geq \frac{\Delta+3}{2}$. (In the exceptional $r=\frac{\Delta+3}{2}$ case, this inequality is still satisfied.)

If we label $v_0$ with $\Delta+r$, then each of these $\Delta+1$ copies of $K_{1,\Delta}$ will have maximum label $\Delta+r$ and therefore it is possible to give the entire $T_{\Delta,2}$ a $(\Delta+r)$-total difference labeling. 

To minimize $\Delta+r$, we choose $r=\frac{\Delta+3}{2}$. 
By construction, we cannot choose a smaller value of $r$ as there would not be enough distinct labels for $v_0,v_1,\ldots,v_\Delta$. Therefore, for odd $\Delta$, $\chi_{td}(T_{\Delta,2})=\Delta+\frac{\Delta+3}{2}=\frac{3\Delta+3}{2}$.

If $\Delta$ is even, Lemma~\ref{lem:star-realize} implies that there are $2r-1$ options for the labels of $v_0,v_1,\ldots,v_\Delta$. Therefore $2r-1\geq \Delta+1$, or, equivalently, $r\geq \frac{\Delta+2}{2}$. A similar argument to the one in the previous case implies that, for even $\Delta$, $\chi_{td}(T_{\Delta,2})=\Delta+\frac{\Delta+2}{2}=\frac{3\Delta+2}{2}.$
Combining the two cases gives us the desired result.

\end{proof}

\begin{theorem}\label{thm:tree}
For any uniform full $\Delta$-ary tree $T_{\Delta,h}$ with $h\geq 2$, $\lfloor{\frac{3\Delta+3}{2}}\rfloor\leq\chi_{td}(T_{\Delta,h})\leq 2\Delta+1$.
\end{theorem}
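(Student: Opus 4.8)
The lower bound is immediate from the tools already developed: since $T_{\Delta,2}$ is a subgraph of $T_{\Delta,h}$ for every $h\geq 2$ (it is the subtree consisting of the root, its $\Delta$ children, and their children, with all of its internal vertices having the correct degree $\Delta$), Proposition~\ref{prop:subgraph} together with Lemma~\ref{lem:tree2} gives $\chi_{td}(T_{\Delta,h})\geq \chi_{td}(T_{\Delta,2})=\lfloor\frac{3\Delta+3}{2}\rfloor$. So the real content is the upper bound $\chi_{td}(T_{\Delta,h})\leq 2\Delta+1$, which I would establish by an explicit recursive construction together with Lemma~\ref{lem:doublesequence}, so that only the vertex labels need to be exhibited and checked for doubles and triples.

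The engine of the construction is the observation that the label $2\Delta+1$ is \emph{universal}: a vertex carrying label $2\Delta+1$ may have as its neighbors any $\Delta$ distinct labels from $[1,2\Delta]$ without creating a double or a triple, since $2(2\Delta+1)$ and $(2\Delta+1)/2$ are never admissible labels and the largest possible sum of two distinct neighbor labels, $(2\Delta)+(2\Delta-1)=4\Delta-1$, is strictly less than $2(2\Delta+1)$. I would therefore root $T_{\Delta,h}$, label the root $2\Delta+1$, and give its $\Delta$ children the labels $1,2,\dots,\Delta$. I would then process the tree top-down: at a non-root vertex $w$ that has already received a label $\ell$ and whose parent carries a label $p$, I must choose labels for the $\Delta-1$ children of $w$ so that, together with $p$, the $\Delta$ neighbors of $w$ are pairwise distinct and create no double or triple at $w$, and then recurse into each child's subtree. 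Spelling out these local constraints, an admissible child label $c$ must avoid $\ell$, $p$, the two double-labels $2\ell$ and $\ell/2$, and the parent-triple label $2\ell-p$, and no two chosen child labels may sum to $2\ell$.

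The heart of the proof is a counting lemma guaranteeing that at least $\Delta-1$ admissible child labels always exist. The triple condition ``no two children sum to $2\ell$'' amounts to choosing at most one label from each \emph{mirror pair} $\{x,2\ell-x\}$, so the labels of $[1,2\Delta+1]\setminus\{\ell\}$ split into such pairs together with singletons whose mirror image falls outside $[1,2\Delta+1]$. A direct count shows that the maximum triple-free selection has size $M(\ell)=2\Delta+1-\ell$ when $\ell\leq\Delta+1$ and $M(\ell)=\ell-1$ when $\ell\geq\Delta+1$, so that $M(\ell)\geq\Delta$ with equality only at the central value $\ell=\Delta+1$. I would then argue that forbidding the parent label $p$ costs at most one usable slot (its mirror $2\ell-p$ is also forbidden, so the whole pair is simply deleted), while each double-label $2\ell$ and $\ell/2$ lies in a pair whose partner remains available and hence costs nothing, except in a few boundary cases where a forbidden label is itself a singleton. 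Tracking these reductions shows that at least $M(\ell)-1\geq \Delta-1$ admissible labels survive, which is exactly what is needed; the universal root and the base case of a single leaf at depth $h$ then complete the induction.

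The main obstacle is precisely this final bookkeeping: near the central label $\ell=\Delta+1$ the slack is only a single slot, so I would need to treat with care the coincidences among the forbidden labels $\ell$, $p$, $2\ell-p$, $2\ell$, $\ell/2$ --- in particular the parity-dependent case where $\ell/2$ is an integer, and the boundary cases where $2\ell$ or a parent label is a singleton rather than half of a mirror pair --- to be certain the surviving count never drops below $\Delta-1$. Once the uniform full tree is handled, the bound for an arbitrary tree follows, as noted before the statement, since every tree of maximum degree $\Delta$ embeds as a subgraph of some $T_{\Delta,h}$ and we may again invoke Proposition~\ref{prop:subgraph}.
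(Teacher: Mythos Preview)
Your proposal is correct and follows essentially the same recursive strategy as the paper: both arguments show that, from the label set $[1,2\Delta+1]$, any vertex label $\ell$ leaves enough room to label all of its neighbors without creating doubles or triples, via the same mirror-pair decomposition $\{\ell-i,\ell+i\}$ that you describe.

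The one simplification the paper makes is that it does \emph{not} single out $2\Delta+1$ as a ``universal'' root label, nor track the parent label $p$ as a separate constraint. Instead it argues uniformly that for \emph{every} $\ell\in[1,2\Delta+1]$ there exist at least $\Delta$ mutually compatible neighbor labels (your $M(\ell)$ minus the double-cost, split into the three cases $\ell<\Delta+1$, $\ell=\Delta+1$, $\ell>\Delta+1$); since the parent's label is automatically one admissible neighbor (the parent--child edge was already double-free), the other $\Delta-1$ choices fall out without the extra bookkeeping you flag as an obstacle. Your explicit tracking of $p$ and $2\ell-p$ is not wrong, just slightly heavier than necessary.
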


\begin{proof}
The lower bound is a result of  Lemma~\ref{lem:tree2} and Proposition~\ref{prop:subgraph}. 

To prove the upper bound, first consider the subgraph $T'$ induced by the root $v_0$ and its neighbors (which we denote $v_1,v_2,\ldots,v_\Delta$). The labels on these vertices must all be distinct by Proposition~\ref{prop:lowerbound}. Choose arbitrarily a label $\ell\in[1,2\Delta+1]$ for $v_0$. We show that for any choice of $\ell$, at most $\Delta$ numbers in $[1,2\Delta+1]$ cannot be labels of other vertices of $T'$, which leave the remaining numbers, of which there are at least $\Delta$, free to label the $\Delta$ neighbors of $v_0$.    

First, suppose $\ell<\Delta+1$. Then, to avoid triples, we may use at most one element from each set $\{\ell-i,\ell+i\}$ for $1\leq i\leq \ell-1$ (though, if $\ell$ is even we must not use the element $\frac{\ell}{2}$ from the set $\{\frac{\ell}{2},\frac{3\ell}{2}\}$ to avoid a double). Further, we may not use the number $2\ell$ as a label. All other numbers are valid for labeling vertices of $T'$ and, as we have only ruled out $\ell\leq\Delta$ options, there are at least enough to label the remaining $\Delta$ vertices of $T'$. An analogous argument shows that if $\ell>\Delta+1$ we also have enough numbers to label all vertices of $T'$.

If $\ell=\Delta+1$ then, to avoid triples, we again may use at most one element from each set $\{\ell-i,\ell+i\}$ for $1\leq i\leq \ell-1$ (and again, one of these includes $\frac{\ell}{2}$ if $\ell$ is even). In this case, however, $2\ell\notin[1,2\Delta+1]$ so we still have enough numbers to label the remaining vertices of $T'$.

In each case, all vertices of $T'$ can be labeled; suppose that vertex $v_j$ gets label $\ell_j$ for $1\leq j\leq \Delta$. By the same argument as above, there are enough available numbers in $[1,2\Delta+1]$ to label all neighbors of each $v_j$. We can give these vertices labels and then repeat the process until the vertices of the entire tree $T_{\Delta,h}$ have been labeled with integers in $[1,2\Delta+1]$.
\end{proof}

The following corollary follows directly from Theorem~\ref{thm:tree}. 
\begin{corollary}
For any tree $T$, $\Delta+1\leq\chi_{td}(T)\leq 2\Delta+1$.
\end{corollary}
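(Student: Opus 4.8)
The plan is to derive both bounds from Theorem~\ref{thm:tree}. The key observation is that every tree $T$ with maximum degree $\Delta$ can be realized as a subgraph of some uniform full $\Delta$-ary tree $T_{\Delta,h}$, provided $h$ is chosen large enough; this fact is stated explicitly in the discussion preceding Lemma~\ref{lem:tree2}. Concretely, one roots $T$ at any vertex, lets $h$ be the eccentricity of that root (the height of the rooted tree), and checks that $T$ embeds into $T_{\Delta,h}$ because $T_{\Delta,h}$ is the maximal rooted tree with branching factor $\Delta$ and height $h$.

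For the upper bound, I would proceed as follows. Fix a tree $T$ with maximum degree $\Delta$. Choosing $h$ as above, we have $T\subseteq T_{\Delta,h}$, so Proposition~\ref{prop:subgraph} gives $\chi_{td}(T)\leq\chi_{td}(T_{\Delta,h})$. Theorem~\ref{thm:tree} then yields $\chi_{td}(T_{\Delta,h})\leq 2\Delta+1$, and combining these inequalities gives $\chi_{td}(T)\leq 2\Delta+1$, as desired.

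For the lower bound, I would note that since $T$ has maximum degree $\Delta$, it contains the star $K_{1,\Delta}$ as a subgraph (take a vertex of degree $\Delta$ together with its neighbors and incident edges). By Proposition~\ref{prop:subgraph} and Theorem~\ref{thm:star}, we obtain $\chi_{td}(T)\geq\chi_{td}(K_{1,\Delta})\geq\Delta+1$, where the final inequality holds regardless of the parity of $\Delta$.

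The only genuine subtlety here is verifying the embedding $T\subseteq T_{\Delta,h}$, i.e.\ confirming that an arbitrary tree of maximum degree $\Delta$ really does sit inside a uniform full $\Delta$-ary tree of sufficient height. This is intuitively clear and is asserted in the text, so the main work is simply ensuring the height is taken large enough (the eccentricity of the chosen root suffices, since the root then has degree at most $\Delta$ and each subsequent level fans out by at most $\Delta-1$). Once this is granted, everything else is an immediate application of Proposition~\ref{prop:subgraph}, Theorem~\ref{thm:star}, and Theorem~\ref{thm:tree}, so the corollary follows directly with no new calculation.
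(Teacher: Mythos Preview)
Your proposal is correct and matches the paper's intended argument: the paper states only that the corollary ``follows directly from Theorem~\ref{thm:tree},'' relying on the earlier observation that every tree embeds in some $T_{\Delta,h}$ for the upper bound, while the lower bound is the standard $K_{1,\Delta}$-subgraph argument used repeatedly elsewhere in the paper. If anything, you are more careful than the paper in separating out the lower bound, since Theorem~\ref{thm:tree} alone only bounds $\chi_{td}(T_{\Delta,h})$ from below and Proposition~\ref{prop:subgraph} does not transfer lower bounds to subgraphs.
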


\bibliographystyle{plain}
\bibliography{references}

\end{document}